\def\mohit#1{\marginpar{$\leftarrow$\fbox{M}}\footnote{$\Rightarrow$~{\sf\textcolor{red}{#1 --Mohit}}}}
\def\sagnik#1{\marginpar{$\leftarrow$\fbox{S}}\footnote{$\Rightarrow$~{\sf\textcolor{red}{#1 --Sagnik}}}}
\def\danupon#1{\marginpar{$\leftarrow$\fbox{D}}\footnote{$\Rightarrow$~{\sf\textcolor{red}{#1 --Danupon}}}}
\def\aris#1{\marginpar{$\leftarrow$\fbox{A}}\footnote{$\Rightarrow$~{\sf\textcolor{red}{#1 --Aris}}}}
\def\mohit#1{}
\def\sagnik#1{}
\def\danupon#1{}
\def\aris#1{}
\theoremstyle{plain}% default
\newtheorem{thm}{Theorem}[section]
\newtheorem{lem}[thm]{Lemma}
\newtheorem{prop}[thm]{Proposition}
\newcommand{\curly}[1]{\left\{ #1 \right\}}
\newcommand{\paren}[1]{\left( #1 \right)}
\DeclareMathOperator{\child}{children}
\DeclareMathOperator{\argmin}{argmin}
\DeclareMathOperator{\argmax}{argmax}
\newcommand{\parent}[2][]{%
	\ifstrempty{#1}{%
		\pi\paren{#2}
	}{%
		\pi_{{#1}}\paren{#2}
	}%
}
\newcommand{\anc}[2][]{%
	\ifstrempty{#1}{%
		{#2}^{\uparrow}
	}{%
		{#2}^{\uparrow {#1}}
	}%
}
\newcommand{\level}[2][]{%
	\ifstrempty{#1}{%
		\ell\paren{#2}
	}{%
		\ell_{#1}\paren{#2}
	}%
}
\newcommand{\treeedge}[2][]{%
	\ifstrempty{#1}{%
		{{e}}\paren{#2}
	}{%
		{{e}}_{{#1}}\paren{#2}
	}%
}
\newcommand{\CASE}[1]{\emph{CASE}-#1}
\newcommand{\desc}[2][]{%
	\ifstrempty{#1}{%
		{#2}^{\downarrow}
	}{%
		{#2}^{\downarrow {#1}}
	}%
}
\def\ps@pprintTitle{%
  \let\@oddhead\@empty
  \let\@evenhead\@empty
  \let\@oddfoot\@empty
  \let\@evenfoot\@oddfoot
}
\begin{document}

\begin{frontmatter}

\title{Edge-Cuts and Rooted Spanning Trees}
%\tnotetext[mytitlenote]{Fully documented templates are available in the elsarticle package on \href{http://www.ctan.org/tex-archive/macros/latex/contrib/elsarticle}{CTAN}.}

%% Group authors per affiliation:
%\author{Mohit Daga\fnref{myfootnote}}
%\address{Radarweg 29, Amsterdam}
%\fntext[myfootnote]{Since 1880.}

%% or include affiliations in footnotes:
\author[mysecondaryaddress]{Mohit Daga\corref{mycorrespondingauthor}}
\cortext[mycorrespondingauthor]{Corresponding author}
\ead{mdaga@kth.se}

%\address[mymainaddress]{}
\address[mysecondaryaddress]{KTH Royal Institute of Technology, Stockholm - Sweden}

\begin{abstract}
We give a closed form formula to determine the size of a $k$-respecting cut. Further, we show that for any $k$, the size of the $k$-respecting cut can be found only using the size of $2$-respecting cuts.
\end{abstract}

\begin{keyword}

\end{keyword}

\end{frontmatter}

%TEX root = ../main-charec-lemma.tex
\section{Introduction}
An edge-cut of a graph is said to $k$ respect a given spanning tree, if the cut shares $k$ edges with the tree. The technique of finding cuts that k-respect a given set of spanning trees is used in designing algorithms to find the size of cuts. The pioneering use appears in two breakthrough results by Karger (ACM STOC 1996 and JACM 2000) and Thorup (ACM STOC 2001 and Combinatorica 2007). The former \cite{karger2000minimum} gives the first linear time algorithm to find the size of a min-cut, whereas the later \cite{thorup2007fully} gives the first fully dynamic algorithm for min-cuts. A common technique among these is to find the size of a minimum 2-respecting cut in a given set of spanning trees. Over the years this technique of finding a 2-respecting cut have found applications in designing algorithms to find min-cuts in several different settings and computational models: centralized, parallel, distributed, streaming, and dynamic.
 
In the centralized setting post the breakthrough result by Karger, recent results by Kwarabayashi and Thorup \cite{kawarabayashi2015deterministic}, improved further by Henzinger et al. \cite{henzinger2020local}  give a deterministic linear time. Several simpler algorithms have been designed that find the size of a min cut using new algorithms to find the size of a 2-respecting cut given a set of trees \cite{bhardwaj2020simple, gawrychowski2020minimum, saranurak2021simple}. Further, a recent breakthrough result that finds all pairs max flow in $\tilde O(n^2)$ uses $4$-respecting cuts \cite{Abboud2021-cd}. \mohit{this is example macro} \aris{this is example macro}.

In the distributed setting the first sub linear algorithm \cite{daga2019distributed} uses the concept of 2-respecting cut. Here the algorithms to find the min-cut has two parts: algorithm to reduce the size of the graph through a contraction mechanism and given a set of trees provide efficient algorithm to find the size of a 2-respecting cut. The result in  \cite{daga2019distributed} is further improved by providing better algorithm for one of the two parts, even leading to an optimal algorithm to find the size of min-cut \cite{dory2021distributed, ghaffari2022universally, ghaffari2020faster, mukhopadhyay2020weighted}. Though, the underlying similarity of finding the size of a minimum 2-respecting cut is common among all. There are several open problems that still remain here, for example the size of a small cut \cite{pritchard2011fast, parter2019small}, for a constant $k$.

Most of the aforementioned results rely on a closed form expression to find size of 2-respecting cuts, and the fact that only a small number of trees are required to be constructed in order to find a tree that 2-respects the minimum cut. The guarantees regarding the small number of trees come from Nash Williams, which states that the number of disjoint trees in a $k$-connected graph is at most $k/2$ \cite{chen1994short,diestel2018graph}. In this paper, we extend the closed form expression to find size of any k-respecting cut. Furthermore, we show that the size of any k-respecting cut can just be found using the size of 2-respecting cuts. Our results rely on the cut-space concept from graph-theory. First,  we give a closed form expression that finds the size of any k-respecting cut (Theorem \ref{thm:main_charac_lemma}). Secondly, we show that the size of any k-respecting cut can be find if we know the size of $1$-respecting cuts, and $2$-respecting cuts (Theorem \ref{thm:main_sec_2}). 

Let $G = (V,E)$ be the given tree. Given a rooted spanning tree T, let $E(T)$ be the edges in the tree, and let $\treeedge[T]{v}$, be the tree edge between $v$ and its parent for all $v \in V$, except the root. For any $A \subset V$, let $\delta(A)$ be the edges in the cut $(A, V\setminus A)$. For any rooted spanning tree $T$, let $\desc[T]{v}$ denote the set of vertices that are decedents of $v$ in $T$, including $v$ itself.
 \begin{thm}
	Let $G= (V,E)$ be a given graph, let $T$ be a rooted spanning tree of $G$ and $A\subset V$. Suppose 
	$E(T) \cap \delta(A) = \curly{\treeedge[T]{v_1},\ldots,\treeedge[T]{v_k}}$, 
	for some vertices $S = \curly{v_1,\ldots,v_k}$. Then                                                                                                  
	\begin{align}
	\label{eqn:cut}
	|\delta(A)| = \sum_{l=1}^k(-1)^{l-1}2^{l-1}\sum_{S'\subseteq[k]\atop|S'|=l}\left|\bigcap_{i\in S'}\delta(\desc[T]{v_i})\right|
	\end{align}	
	\label{thm:main_charac_lemma}
\end{thm}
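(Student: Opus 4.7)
The plan is to reduce the claim to the standard inclusion--exclusion identity for the cardinality of a symmetric difference. First, I would establish the set-level identity
\begin{align*}
\delta(A) \;=\; \delta(\desc[T]{v_1}) \,\triangle\, \delta(\desc[T]{v_2}) \,\triangle\, \cdots \,\triangle\, \delta(\desc[T]{v_k}),
\end{align*}
where $\triangle$ denotes symmetric difference. To see this, fix any edge $e=(u,w)\in E$ and let $P$ be the unique $u$--$w$ path in $T$. Walking along $P$, one alternates between $A$ and $V\setminus A$ precisely at tree edges lying in $\delta(A)$, which by hypothesis are exactly $\treeedge[T]{v_1},\ldots,\treeedge[T]{v_k}$; hence $e\in\delta(A)$ iff $|P\cap\curly{\treeedge[T]{v_1},\ldots,\treeedge[T]{v_k}}|$ is odd. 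On the other hand, $e\in\delta(\desc[T]{v_i})$ iff exactly one of $u,w$ lies in $\desc[T]{v_i}$, which happens iff $\treeedge[T]{v_i}\in P$. So $e$ lies in an odd number of the fundamental cuts $\delta(\desc[T]{v_i})$ iff the same count is odd, and the two edge-membership tests agree.

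Next, I would invoke the inclusion--exclusion formula for the size of a symmetric difference: for any sets $A_1,\ldots,A_k$,
\begin{align*}
|A_1\triangle\cdots\triangle A_k| \;=\; \sum_{l=1}^k(-1)^{l-1}2^{l-1}\sum_{S'\subseteq[k],\,|S'|=l}\left|\bigcap_{i\in S'}A_i\right|.
\end{align*}
An element $x$ lying in exactly $j$ of the $A_i$'s contributes $1$ to the left side iff $j$ is odd, and contributes $\sum_{l=1}^{j}(-1)^{l-1}2^{l-1}\binom{j}{l}$ to the right side (since $x\in\bigcap_{i\in S'}A_i$ iff $S'$ is a subset of the $j$ indices where $x$ appears). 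The binomial identity $\sum_{l=0}^{j}\binom{j}{l}(-2)^l=(-1)^j$ rearranges to show that this sum equals $(1-(-1)^j)/2$, which is $1$ for odd $j$ and $0$ for even $j$. Matching contributions completes the identity. Applying it with $A_i=\delta(\desc[T]{v_i})$ and combining with the first step yields the theorem.

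The main conceptual step is the first one: identifying an arbitrary cut $\delta(A)$ with the symmetric difference of the fundamental tree cuts indexed by the tree edges it crosses. This is essentially the statement that the fundamental cuts form an $\mathbb{F}_2$-basis of the cut space of $G$, and that in the expansion of $\delta(A)$ in this basis the coefficient of $\delta(\desc[T]{v})$ is determined solely by whether $\treeedge[T]{v}\in\delta(A)$. Once this structural fact is in place, the remainder is a short binomial computation that does not interact with the graph structure.
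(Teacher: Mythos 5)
Your proof is correct, and it reaches the same pivotal identity as the paper --- namely $\delta(A)=\delta(\desc[T]{v_1})\oplus\cdots\oplus\delta(\desc[T]{v_k})$ --- but by a genuinely different mechanism. The paper works at the level of vertex sets: it first shows (its Proposition~\ref{prop-A-can-be-written}) that $A$ or $V\setminus A$ can be written as a symmetric difference of descendant sets $\desc[T]{v}$, then pins down which indices appear by inspecting which tree edges survive in the cut (Propositions~\ref{prop-only-tree-edge}--\ref{prop-sec3-main}), and finally pushes $\delta$ through $\oplus$ using the cut-space vector-space lemma (Lemma~\ref{cutspace-vector-space}), which is imported as a black box from Bondy--Murty. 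You instead argue directly at the edge level: for an arbitrary edge $e=(u,w)$ you compare the parity of $|P\cap\{\treeedge[T]{v_i}\}_i|$, where $P$ is the tree path between the endpoints, against both $e\in\delta(A)$ (via the alternation-across-the-cut observation) and $e\in\bigoplus_i\delta(\desc[T]{v_i})$ (via the fact that $\treeedge[T]{v_i}$ is the unique tree edge in the fundamental cut $\delta(\desc[T]{v_i})$), and these coincide. This is more self-contained --- you are in effect reproving the cut-space lemma in the concrete setting needed rather than citing it --- and it sidesteps the need to exhibit $A$ itself as an $\oplus$ of descendant sets. Your re-derivation of Proposition~\ref{set-theory-result} via the binomial identity $\sum_{l=0}^{j}\binom{j}{l}(-2)^l=(-1)^j$ is also correct. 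One small point worth making explicit in a write-up: the alternation argument uses that a path in a tree traverses each tree edge at most once, so parity of crossings equals parity of $|P\cap\{\treeedge[T]{v_i}\}_i|$; this is why the argument is clean for tree paths and would not directly work for arbitrary walks.
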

Further using some combinatorial arguments we show that the size of any $k$-respecting cut can just be found using pair-wise 2-respecting cuts. 
\begin{thm}
	\label{thm:main_sec_2}		
	Let $G= (V,E)$ be a given graph, 
	let $T$ be a rooted spanning tree of $G$ and $A\subset V$. 
	Suppose 
	$E(T) \cap \delta(A) = \curly{\treeedge[T]{v_1},\ldots,\treeedge[T]{v_k}}$, 
	for some vertices $S = \curly{v_1,\ldots,v_k}$.
	Then $|\delta(A)|$ can be determined if the following is known 
	\begin{itemize}
		\item $|\delta(\desc[T]{x})|\ \forall x \in S$,
		\item $|\delta(\desc[T]{x}) \cap \delta(\desc[T]{y})|\ \forall x,y \in S$ and
		\item the path from root of $T$ to $x$ for all $x \in S$.
	\end{itemize} 
\end{thm}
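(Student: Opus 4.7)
The plan is to start from the closed-form formula of Theorem~\ref{thm:main_charac_lemma} and argue that, for every subset $S'\subseteq S$ with $|S'|\ge 3$, the higher-order term $\left|\bigcap_{i\in S'}\delta(\desc[T]{v_i})\right|$ is either $0$ or equals a pairwise quantity $\left|\delta(\desc[T]{v_a})\cap\delta(\desc[T]{v_b})\right|$ for some $v_a,v_b\in S'$ determined purely by the ancestry of $S'$ in $T$. Once this is established, substituting into Equation~\eqref{eqn:cut} expresses $|\delta(A)|$ as a linear combination---with coefficients that depend only on the tree structure of $S$ and are computable from the root-to-$x$ paths---of the singleton and pairwise cut sizes supplied in the first two bullets.

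The reduction rests on a geometric observation. For an edge $e=(u,w)$ with $z=\LCA(u,w)$, one has $e\in\delta(\desc[T]{v})$ if and only if $v$ is an ancestor of $u$ or of $w$ that is a strict descendant of $z$; equivalently, $v$ lies on the tree path between $u$ and $w$ strictly below $z$. The set of such $v$ decomposes into two chains in ancestry (the $u$-branch and the $w$-branch below $z$) and therefore contains no antichain of size $3$.

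The case analysis on $S'$ then proceeds as follows. If $S'$ contains three pairwise-incomparable vertices in $T$, the observation forces $\bigcap_{i\in S'}\delta(\desc[T]{v_i})=\emptyset$. If $S'$ is a chain, let $v^\star$ and $v_\top$ denote its deepest and shallowest elements; I would argue that any $e$ in $\delta(\desc[T]{v^\star})\cap\delta(\desc[T]{v_\top})$ places both $v^\star$ and $v_\top$ on the same branch of the tree path, for otherwise $v_\top$ (an ancestor of $v^\star$) would be a common ancestor of $u,w$ and could not witness $e\in\delta(\desc[T]{v_\top})$; every intermediate chain element then also lies on that branch strictly below the LCA, yielding $\bigcap_{i\in S'}\delta(\desc[T]{v_i})=\delta(\desc[T]{v^\star})\cap\delta(\desc[T]{v_\top})$. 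Otherwise $S'$ has exactly two maximal elements $v^\star_1,v^\star_2$ (incomparable, with every other element a strict ancestor of at least one of them). Setting $z^\star=\LCA(v^\star_1,v^\star_2)$, the incomparability of $v^\star_1,v^\star_2$ forces $\LCA(u,w)=z^\star$ for any $e$ in $\delta(\desc[T]{v^\star_1})\cap\delta(\desc[T]{v^\star_2})$; the full intersection is then empty whenever $S'$ contains an element that is not a strict descendant of $z^\star$, and otherwise equals $\delta(\desc[T]{v^\star_1})\cap\delta(\desc[T]{v^\star_2})$.

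Plugging these identities back into Equation~\eqref{eqn:cut} and regrouping by the target pair $(v_a,v_b)$ produces the claimed expression of $|\delta(A)|$ in terms of the three ingredients from the statement; all ancestry data and LCA computations needed to carry out the case split are determined by the root-to-$x$ paths in the third bullet. The main obstacle I anticipate is the two-maximal-elements case: establishing $\LCA(u,w)=\LCA(v^\star_1,v^\star_2)$ from the incomparability of $v^\star_1,v^\star_2$ and performing the correct side-analysis on the tree path takes care, especially in the degenerate sub-case where some $v_i\in S'$ fails to be a strict descendant of $z^\star$ and the full intersection collapses to $\emptyset$.
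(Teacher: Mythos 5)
Your proof is correct but follows a genuinely different route from the paper's. Both start from Theorem~\ref{thm:main_charac_lemma} and collapse each term $\left|\bigcap_{i\in S'}\delta(\desc[T]{v_i})\right|$ to a pairwise or zero quantity, but the paper's proof (Lemma~\ref{lemma:pair_wise_info_is_enough} via the four cases of Figure~\ref{fig-four-different-cases}) is \emph{recursive}: \CASE{4} (Proposition~\ref{obs:gamma_4}) merely deletes one vertex and recurses, and the reduction to a pairwise quantity only lands after that recursion bottoms out in \CASE{1}, \CASE{2}, or \CASE{3}. You instead replace the enumeration with a single structural observation the paper never states explicitly---for a fixed edge $e=(u,w)$ with $z=\LCA(u,w)$, the witness set $\{v : e\in\delta(\desc[T]{v})\}$ is exactly the $u$--$w$ tree path with $z$ deleted, hence a union of two ancestry chains and free of antichains of size three---and then classify $S'$ by its number of maximal elements (one, two, or at least three), reading off a direct, non-recursive closed form. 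Your two-maximal sub-case in fact absorbs the paper's \CASE{3} (the intersection vanishes because some element of $S'$ fails to be a strict descendant of $\LCA(v^\star_1,v^\star_2)$) and the terminal situation of \CASE{4} (the intersection collapses to the pair of maximals), so your decomposition is a refinement that eliminates the recursion. What you buy is a tighter one-shot argument built on a reusable geometric lemma; what the paper's enumeration buys is avoiding LCA and poset vocabulary altogether. The ``obstacle'' you flag---establishing $\LCA(u,w)=\LCA(v^\star_1,v^\star_2)$ from the incomparability of the two maximals---is a real obligation but a routine one: two incomparable vertices both lying on the $u$--$w$ path strictly below $z$ must sit on opposite sides of $z$, forcing their LCA to be $z$. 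So it is a detail to be written out, not a gap.
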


%TEX root = ../main-charec-lemma.tex
\section{Preliminaries}
For any rooted spanning tree $T$, we denote $r_T$ as its root. For any $v \in V$, let $\desc[T]{v}$ be the vertex set that are decedents of $v$ in the tree $T$ including itself. Similarly, $\anc[T]{v}$ is the set of vertices which are on the path $r_T$ to $v$. For all vertices $v \in V\setminus r_T$, we use $\parent[T]{v}$ to denote the parent of $v$ in $T$ and $\treeedge[T]{v}$ to denote the tree edge between $v$ and $\parent[T]{v}$. Let $\level[T]{v}$ be the distance of vertex $v$ from the root $r_T$ in the tree $T$. Let $\child_T(v)$ denote the children of the vertex $v$ in the tree $T$. If $v$ is a leaf node, then define $\child_T(v)$ to be $\emptyset$. For any two vertices $v$ and $u$, we say that $v$ and $u$ are \emph{independent w.r.t the tree $T$}, denoted using $v\perp_T u$ iff $\desc[T]{v} \cap \desc[T]{u} = \emptyset$. If they are not then we say that they are not independent, and denote it using $v\not\perp_T u$. 

We will use $\oplus$ to denote the symmetric difference operator. More precisely, for any sets $A_1,A_2,\ldots,A_k$, we have $a\in\bigoplus_{k'=1}^k A_{k'}$ iff $|\{k'\in[k]:a\in A_{k'}\}|$ is odd. Throughout this paper, when we use $\curly{}$, we mean it to be set, and not multi-set. That is each entry in $\{\}$ occurs exactly once. Whenever we use an index $k$, it means a whole number less than the number of vertices in the graph.

We stress the readers to familiarize with $\oplus$ operator. We make the following simple preposition to do the same. These qualify how the symmetric difference operator appears when two sets are considered.
\begin{prop}
	Let $A_1$ and $A_2$ be any two sets. Suppose $A_1 \cap A_2 = \emptyset$, then $A_1 \oplus A_2 = A_1 \cup A_2$. Further, suppose, $A_2 \subseteq A_1$, then $A_1 \oplus A_2 = A_1 \setminus A_2$. 
	\label{simple-prop}
\end{prop}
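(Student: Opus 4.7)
The plan is to prove both claims by a direct element-chase starting from the definition of $\oplus$ given in the Preliminaries: an element $a$ lies in $A_1 \oplus A_2$ if and only if $|\{i \in \{1,2\} : a \in A_i\}|$ is odd, which for two sets means exactly one. Thus the entire proposition reduces to checking, element by element, which of the two alternatives ``exactly one'' or ``both/neither'' occurs under each hypothesis.

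For the first claim, I would fix an arbitrary element $a$ and establish both inclusions. If $a \in A_1 \oplus A_2$, then $a$ belongs to exactly one $A_i$, so in particular $a \in A_1 \cup A_2$. Conversely, if $a \in A_1 \cup A_2$, then $a$ lies in at least one $A_i$, and the disjointness hypothesis $A_1 \cap A_2 = \emptyset$ rules out membership in both; hence $a$ belongs to exactly one, placing $a \in A_1 \oplus A_2$.

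For the second claim, assume $A_2 \subseteq A_1$. If $a \in A_1 \setminus A_2$, then $a \in A_1$ and $a \notin A_2$, so $a$ lies in exactly one of the two sets and therefore $a \in A_1 \oplus A_2$. Conversely, if $a \in A_1 \oplus A_2$, then $a$ lies in exactly one of $A_1, A_2$; the case $a \in A_2 \setminus A_1$ is impossible because $A_2 \subseteq A_1$, so the only remaining possibility is $a \in A_1 \setminus A_2$.

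The argument presents no real obstacle: the proposition is essentially a restatement of the definition of $\oplus$ in two particular configurations. The only care needed is to explicitly invoke the hypothesis ($A_1 \cap A_2 = \emptyset$ in part one, $A_2 \subseteq A_1$ in part two) at the moment when the ``both'' or ``only in $A_2$'' alternative in the case analysis must be excluded.
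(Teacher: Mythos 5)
Your proof is correct: the element-wise case analysis from the paper's definition of $\oplus$ (membership in an odd number, hence exactly one, of the two sets) establishes both claims, and you invoke the hypotheses exactly where they are needed. The paper itself states this proposition without proof, treating it as immediate from the definition, so your argument simply supplies the routine verification the paper omits.
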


We use a well-known result that states that cut-spaces are a vector space with respect to the $\oplus$ operator. This can be found in standard graph theory books, for ex. Bondy and Murty \cite{bondy1976graph}. 
\begin{lem}[Also noted as Proposition 2.1 in \cite{pritchard2011fast}]	Let $T$ be a given spanning tree and $v_1,v_2,\ldots,v_k \in V$. Then $\delta(\desc[T]{v_1} \oplus \desc[T]{v_2} \oplus \ldots \oplus \desc[T]{v_k}) = \delta(\desc[T]{v_1}) \oplus \delta(\desc[T]{v_2}) \oplus \ldots \oplus \delta(\desc[T]{v_k})$
	\label{cutspace-vector-space}
\end{lem}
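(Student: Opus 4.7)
The plan is to start from formula~\eqref{eqn:cut} in Theorem~\ref{thm:main_charac_lemma} and to show that, for every $S'\subseteq S$ with $|S'|\ge 3$, the $l$-wise intersection size $\bigl|\bigcap_{v_i\in S'}\delta(\desc[T]{v_i})\bigr|$ is either $0$ or equal to $|\delta(\desc[T]{v_a})\cap\delta(\desc[T]{v_b})|$ for a single pair $v_a,v_b\in S'$ that depends only on the ancestor/descendant relations among the elements of $S'$ in $T$. Since the root-to-$v_i$ paths reveal exactly those relations, such a reduction lets us rewrite every summand of~\eqref{eqn:cut} using only the singleton sizes $|\delta(\desc[T]{x})|$, the pairwise sizes $|\delta(\desc[T]{x})\cap\delta(\desc[T]{y})|$, and combinatorial coefficients read off the paths, which is exactly the content of Theorem~\ref{thm:main_sec_2}.

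The driving observation is the standard fact that for any edge $e=(u,w)$ of $G$ and any $v_i$, one has $e\in\delta(\desc[T]{v_i})$ iff the tree edge $\treeedge[T]{v_i}$ lies on the $T$-path from $u$ to $w$. Hence $e\in\bigcap_{v_i\in S'}\delta(\desc[T]{v_i})$ iff every $\treeedge[T]{v_i}$ with $v_i\in S'$ lies on that single $T$-path. Since any $T$-path is the concatenation of two ancestor chains meeting at the LCA of its endpoints, the vertex set $S'$ must itself split as a disjoint union of at most two totally ordered ancestor chains in $T$; otherwise the intersection is empty.

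I would then run a case analysis on the number of maximal elements of $S'$ in the induced ancestor order. If $S'$ has at least three maximal elements, the intersection is empty, since three pairwise incomparable vertices of $T$ have pairwise disjoint descendant sets and a single edge cannot cross three pairwise disjoint sets simultaneously. If $S'$ has exactly two maximal elements $v_a^{\top},v_b^{\top}$, then the remaining elements of $S'$ partition into the subtrees rooted at $v_a^{\top}$ and $v_b^{\top}$, each side must itself be a chain or the intersection vanishes, and letting $v_a^{\bot},v_b^{\bot}$ be the deepest element of $S'$ on each side one verifies that $\bigcap_{v_i\in S'}\delta(\desc[T]{v_i})=\delta(\desc[T]{v_a^{\bot}})\cap\delta(\desc[T]{v_b^{\bot}})$, using that the incomparability of the two tops makes the intermediate chain constraints automatic. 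If $S'$ has exactly one maximal element $v^{\top}$, then $S'$ must be a single ancestor chain with bottom $v^{\bot}$ for the intersection to be non-empty, in which case $\bigcap_{v_i\in S'}\delta(\desc[T]{v_i})=\delta(\desc[T]{v^{\top}})\cap\delta(\desc[T]{v^{\bot}})$.

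Substituting these reductions back into~\eqref{eqn:cut} writes $|\delta(A)|$ as a signed sum whose summands are singleton sizes, pairwise intersection sizes, or zeros, with each bucket $S'$ routed to its representative pair $(v_a^{\bot},v_b^{\bot})$ or $(v^{\top},v^{\bot})$ purely by the ancestor structure of $S$ in $T$—information that is encoded in the root-to-$v_i$ paths. The main technical obstacle will be the two-maximal case: carefully checking that the $a$ priori restriction ``one endpoint is a descendant of every $v_i$ on the left chain and the other endpoint is a descendant of every $v_i$ on the right chain'' collapses, under the incomparability of the two tops, to the cleaner condition ``one endpoint in $\desc[T]{v_a^{\bot}}$ and the other in $\desc[T]{v_b^{\bot}}$'', so that the $l$-wise intersection really equals the advertised $2$-wise intersection rather than a strict subset of it.
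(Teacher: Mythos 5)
Your proposal does not prove the statement in question. Lemma \ref{cutspace-vector-space} is the assertion that $\delta$ is linear over symmetric difference, i.e.\ $\delta(\desc[T]{v_1}\oplus\cdots\oplus\desc[T]{v_k})=\delta(\desc[T]{v_1})\oplus\cdots\oplus\delta(\desc[T]{v_k})$; what you sketch instead is a reduction of $l$-wise intersection sizes $\bigl|\bigcap_{i\in S'}\delta(\desc[T]{v_i})\bigr|$ to pairwise ones, which is the content of Lemma \ref{lemma:pair_wise_info_is_enough} and Theorem \ref{thm:main_sec_2} (you even say so yourself). Worse, as an argument for Lemma \ref{cutspace-vector-space} your route is circular: you start from formula \eqref{eqn:cut} of Theorem \ref{thm:main_charac_lemma}, but the paper derives Theorem \ref{thm:main_charac_lemma} from Lemma \ref{cutspace-vector-space} (via Propositions \ref{claim-8a} and \ref{prop-sec3-main}), so the lemma cannot be obtained as a consequence of that theorem. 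Nothing in your case analysis on maximal elements, chains, and LCAs ever addresses the identity of two edge sets under $\oplus$.

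What is actually needed is much simpler, and in the paper it is not proved at all but cited as a standard fact (Bondy--Murty; Proposition 2.1 of the Pritchard reference): for any vertex sets $A_1,\ldots,A_k$ (the tree structure is irrelevant) and any edge $e=(u,w)$, write $a_i=1$ if $u\in A_i$ and $b_i=1$ if $w\in A_i$, working modulo $2$. Then $e\in\delta\bigl(\bigoplus_i A_i\bigr)$ iff exactly one endpoint lies in $\bigoplus_i A_i$, i.e.\ iff $\sum_i a_i+\sum_i b_i\equiv 1$, i.e.\ iff $\sum_i(a_i+b_i)\equiv 1$, i.e.\ iff $e$ lies in an odd number of the sets $\delta(A_i)$, which is exactly the condition $e\in\bigoplus_i\delta(A_i)$. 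If you want to supply a proof rather than a citation, this edge-by-edge parity argument is the one to write; your ancestor-chain analysis belongs to the proof of Theorem \ref{thm:main_sec_2}, where it indeed parallels the paper's \CASE{1}--\CASE{4} treatment, but it has no bearing on the lemma you were asked to prove.
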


We also mention a set-theoretic result for the cardinality of xor operation of $k$ sets.
\begin{prop}
	Suppose $A_1,\ldots,A_k$ are some sets. Then $$\left|\bigoplus_{i=1}^kA_i\right|=\sum_{l=1}^k(-1)^{\ell-1}2^{l-1}\sum_{S\subseteq[k]\atop|S|=l}\left|\bigcap_{i\in S}A_i\right|.$$
	\label{set-theory-result}
\end{prop}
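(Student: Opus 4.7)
The plan is to prove the identity element by element, using the standard trick of expressing both sides as sums of indicator contributions and then reducing to a single binomial identity.

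First, I would fix an element $a$ in the universe $\bigcup_{i=1}^k A_i$ and let $m(a) = |\{i \in [k] : a \in A_i\}|$ be its multiplicity. By definition of $\oplus$, the element $a$ contributes $1$ to $\bigl|\bigoplus_{i=1}^k A_i\bigr|$ precisely when $m(a)$ is odd. On the right-hand side, the element $a$ contributes to $\bigl|\bigcap_{i\in S} A_i\bigr|$ exactly when $S \subseteq \{i : a \in A_i\}$, so for fixed $l$ it contributes to the inner sum a total of $\binom{m(a)}{l}$. Hence the whole right-hand side equals
\begin{equation*}
\sum_a \sum_{l=1}^k (-1)^{l-1} 2^{l-1} \binom{m(a)}{l}.
\end{equation*}

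The claim therefore reduces to the following identity: for every integer $m \ge 0$,
\begin{equation*}
\sum_{l=1}^{m} (-1)^{l-1} 2^{l-1} \binom{m}{l} \;=\; \mathbb{1}[m \text{ is odd}],
\end{equation*}
where we may freely extend the upper limit to $k$ since $\binom{m}{l}=0$ for $l > m$. I would prove this by pulling out a factor of $-\tfrac{1}{2}$ and recognizing the binomial expansion of $(1-2)^m$:
\begin{equation*}
\sum_{l=1}^{m} (-1)^{l-1} 2^{l-1} \binom{m}{l} = -\tfrac{1}{2}\sum_{l=1}^{m}\binom{m}{l}(-2)^{l} = -\tfrac{1}{2}\bigl[(1-2)^m - 1\bigr] = \tfrac{1}{2}\bigl[1 - (-1)^m\bigr],
\end{equation*}
which equals $1$ when $m$ is odd and $0$ when $m$ is even (including $m=0$, where the original sum is empty).

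Summing the per-element equality over all $a$ gives the claimed formula. There is no substantial obstacle here: the step to watch is the swap of summation order, which is legitimate because all sums are finite, and the handling of the $m=0$ case, which is consistent because elements outside $\bigcup A_i$ contribute $0$ to both sides. The whole argument is essentially a variant of inclusion-exclusion where each intersection is weighted by $(-2)^{l-1}$ rather than $(-1)^{l-1}$, and that weight is exactly what is needed to detect parity rather than nonemptiness.
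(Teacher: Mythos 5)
Your proof is correct. The paper itself does not give a proof of this proposition: it is stated without argument as a known ``set-theoretic result,'' so there is no in-paper proof to compare against. Your element-counting reduction to the binomial identity $\sum_{l=1}^{m}(-1)^{l-1}2^{l-1}\binom{m}{l}=\tfrac{1}{2}\bigl[1-(-1)^m\bigr]$ is the standard and natural way to establish it, and every step checks out, including the factor-out of $-\tfrac{1}{2}$, the evaluation via $(1-2)^m$, and the $m=0$ edge case. In effect you are supplying a proof the paper omitted, and your observation that the weight $(-2)^{l-1}$ is precisely the inclusion-exclusion modification that detects parity (rather than nonemptiness) is a nice way to place the identity in context.
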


%TEX root = ../main-charec-lemma.tex
\section{Cut Characterization Lemma \label{cut-charec-proof}}
In this section, we prove the characterization given in Theorem \ref{thm:main_charac_lemma}. We know that $\delta(A) = \delta(V\setminus A)$. We show that for any $A\subset V$, if $\delta(A) \cap E(T) = \curly{\treeedge[T]{v_1},\ldots,\treeedge[T]{v_k}}$ for some vertex set $S = \curly{v_1,\ldots,v_k}$, where $v_i \in V\setminus r_T$, then either $A = \desc[T]{v_1}\oplus\ldots\oplus\desc[T]{v_k}$ or $V\setminus A = \desc[T]{v_1}\oplus\ldots\oplus\desc[T]{v_k}$. This together with Lemma \ref{cutspace-vector-space} and Proposition \ref{set-theory-result} leads to Theorem \ref{thm:main_charac_lemma}.
\begin{prop}
	Let $T$ be a rooted spanning tree. For any vertex set $A \subset V$, either $A$ or $V\setminus A$ is equal to  $\bigoplus_{v\in S} \desc[T]{v}$for some $S \subseteq V\setminus r_T$.
	\label{prop-A-can-be-written}
\end{prop}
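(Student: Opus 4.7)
The plan is to show that the set $S$ can be chosen concretely from the structure of the cut. Without loss of generality, assume $r_T \notin A$; if not, we can work with $V \setminus A$ instead, since the statement is symmetric under complementation. I would then define
\[
S \;=\; \{\,u \in V \setminus r_T \;:\; \treeedge[T]{u} \in \delta(A)\,\},
\]
that is, $S$ consists of precisely those non-root vertices whose tree edge crosses the cut $(A, V\setminus A)$. The goal becomes proving $A = \bigoplus_{v \in S} \desc[T]{v}$ with this particular choice.

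To prove the equality, I would argue vertex by vertex, by induction on $\level[T]{u}$. The key observation is that for any vertex $u$, the condition $u \in \bigoplus_{v\in S}\desc[T]{v}$ is equivalent to $|S \cap \anc[T]{u}|$ being odd, since $u \in \desc[T]{v}$ exactly when $v$ is an ancestor of $u$ (including $u$ itself). For the base case $u = r_T$, we have $\anc[T]{r_T} = \{r_T\}$ and $r_T \notin S$, so the parity is even, matching $r_T \notin A$. For the inductive step on $u \neq r_T$, use $\anc[T]{u} = \anc[T]{\parent[T]{u}} \cup \{u\}$ so that
\[
|S \cap \anc[T]{u}| \;=\; |S \cap \anc[T]{\parent[T]{u}}| + [u \in S].
\]
By the definition of $S$, the bit $[u \in S]$ is exactly the indicator of whether $u$ and $\parent[T]{u}$ lie on opposite sides of the cut. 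Combined with the inductive hypothesis applied to $\parent[T]{u}$, this gives $u \in A$ iff $|S \cap \anc[T]{u}|$ is odd, closing the induction.

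I do not expect a serious obstacle here; the proof is essentially bookkeeping once one commits to the right $S$. The only subtle point is recognizing that the natural candidate for $S$ is forced: the defining requirement on $A$ specifies, for each $u$, a target parity for $|S \cap \anc[T]{u}|$, and comparing the parities demanded at $u$ and at $\parent[T]{u}$ leaves exactly one choice for $[u \in S]$, namely whether $\treeedge[T]{u} \in \delta(A)$. Once this is observed, the induction is clean. Finally, since the hypothesis that $E(T) \cap \delta(A) = \{\treeedge[T]{v_1},\ldots,\treeedge[T]{v_k}\}$ means the constructed $S$ equals $\{v_1,\ldots,v_k\}$, combining Proposition \ref{prop-A-can-be-written} with Lemma \ref{cutspace-vector-space} and Proposition \ref{set-theory-result} immediately yields the closed form of Theorem \ref{thm:main_charac_lemma}.
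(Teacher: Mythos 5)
Your proof is correct, but it takes a genuinely different route from the paper's. The paper argues algebraically from $A$ outward: it writes each singleton $\curly{v}$ with $v\in A$ as $\desc[T]{v}\bigoplus_{c\in\child_T(v)}\desc[T]{c}$ (using Proposition~\ref{simple-prop}), takes the disjoint union over $v\in A$, converts it to a symmetric difference, and collects terms by parity to extract some $S$. This is short, but it produces $S$ only implicitly, and the paper then needs Propositions~\ref{claim-8a} and~\ref{prop-sec3-main} to pin down that this $S$ is precisely $\{v : \treeedge[T]{v}\in\delta(A)\}$. Your proof instead commits to that $S$ up front and verifies $A=\bigoplus_{v\in S}\desc[T]{v}$ by induction on $\level[T]{u}$, using the observation that membership of $u$ in the symmetric difference is equivalent to $|S\cap\anc[T]{u}|$ being odd, and that the parity increment from $\parent[T]{u}$ to $u$ is exactly the indicator $[\treeedge[T]{u}\in\delta(A)]$. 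This buys you more: you prove not just existence of a representing $S$ but also that $S$ is forced to equal the set of crossing tree edges, so your argument subsumes Proposition~\ref{prop-sec3-main} directly and shortens the path to Theorem~\ref{thm:main_charac_lemma}. The paper's version is slightly leaner in isolation; yours does more work in one pass and avoids the subsequent bookkeeping propositions.
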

\begin{proof}
	Let the root $r_T \notin A$. For any $v \in A$, we know that for any $c \in \child_T(v)$, $\desc[T]{c} \subset \desc[T]{v}$. Using Proposition \ref{simple-prop}, $\curly{v} = \desc[T]{v} \bigoplus\limits_{c \in \child_T(v)}\desc[T]{c}$. Thus,
	\begin{align*}
	A &= \bigcup\limits_{v \in A}{(\desc[T]{v} \bigoplus\limits_{c \in \child_T(v)}\desc[T]{c})}\\ 
	&= \bigoplus\limits_{v \in A}\paren{\desc[T]{v} \bigoplus\limits_{c \in \child_T(v)}\desc[T]{c}} 
	\end{align*}
	Here the last equality is true because for any two sets $S_1,S_2$ if $S_1 \cap S_2  = \emptyset$, then $S_1 \cup S_2 = S_1 \oplus S_2$ (see Proposition \ref{simple-prop}).  For avoiding multiplicity of occurrences and enforcing $S$ to be a set, we can remove any vertex $v$, if $\desc[T]{v}$ occurs even number of times, and keep it only once when it occur odd number of times. When $r_T \in A$, then $V\setminus A$ can be represented similarly.
\end{proof}
\begin{prop}
For any $u,v \in V\setminus r_T$, if $u \neq v$ and $u\in \desc[T]{v}$, then $\parent[T]{u}\in \desc[T]{v}$.
\label{prop-both-u-parent-u-in-desc-v}
\end{prop}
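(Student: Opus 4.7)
The plan is to unpack the definition of $\desc[T]{v}$ via the unique root-to-vertex paths in $T$. Recall that a vertex $w$ lies in $\desc[T]{v}$ if and only if $v$ appears on the tree path $\uptree[T]{w}$ from $r_T$ to $w$ (this is the standard characterization of descendants in a rooted tree, and it is consistent with the convention $v \in \desc[T]{v}$).

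First I would fix $u, v$ as in the hypothesis and consider the unique tree path $\uptree[T]{u} = (r_T, \ldots, \parent[T]{u}, u)$ from $r_T$ to $u$. Since $u \in \desc[T]{v}$, the vertex $v$ occurs somewhere on this path. Because $u \neq v$, the occurrence of $v$ must be strictly before the final vertex $u$, so $v$ appears at some position at or before $\parent[T]{u}$. This means the initial segment of the path from $r_T$ up to and including $\parent[T]{u}$ already contains $v$; but this initial segment is exactly $\uptree[T]{\parent[T]{u}}$. Hence $v$ lies on the path from $r_T$ to $\parent[T]{u}$, which by the characterization above gives $\parent[T]{u} \in \desc[T]{v}$.

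It is worth pointing out the boundary case $\parent[T]{u} = v$, which is handled automatically since $v \in \desc[T]{v}$ by convention. The main (minor) obstacle is simply being careful that $u \neq v$ is used: without this, $u$ might be the only occurrence of $v$ on its own root-path, and $\parent[T]{u}$ need not lie in $\desc[T]{v}$ (indeed $\parent[T]{v} \notin \desc[T]{v}$ in general). Apart from this subtlety, the argument is a direct consequence of how tree descendants are defined and requires no auxiliary machinery.
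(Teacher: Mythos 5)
The paper states this proposition without a proof, treating it as self-evident. Your argument correctly fills in that omitted justification: you unpack the standard characterization that $w \in \desc[T]{v}$ if and only if $v$ lies on the unique tree path from $r_T$ to $w$, observe that the root-to-$u$ path is the root-to-$\parent[T]{u}$ path extended by $u$, and use $u \neq v$ to conclude that the occurrence of $v$ on the root-to-$u$ path must already lie in the initial segment up to $\parent[T]{u}$. The remark on where $u \neq v$ is actually needed (namely, that $\parent[T]{v} \notin \desc[T]{v}$) shows you understand why the hypothesis cannot be dropped. The proof is correct and at the right level of rigor for a fact the authors chose to leave implicit.
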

\begin{prop}
For any, rooted spanning tree $T$ and $v \in V\setminus r_T$, $\delta(\desc[T]{v}) \cap E(T) = \treeedge[T]{v}$
\label{prop-only-tree-edge}
\end{prop}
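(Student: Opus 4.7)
The plan is to show that the edge $\treeedge[T]{v}$ lies in the cut, and that no other tree edge does. The first part is immediate: by definition $v \in \desc[T]{v}$, while $\parent[T]{v} \notin \desc[T]{v}$ since $\parent[T]{v}$ is a proper ancestor of $v$. Hence $\treeedge[T]{v} = (v, \parent[T]{v})$ has exactly one endpoint in $\desc[T]{v}$ and is therefore in $\delta(\desc[T]{v})$.

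For the second part, I would take an arbitrary tree edge $\treeedge[T]{u} = (u, \parent[T]{u})$ with $u \in V \setminus r_T$ and $u \neq v$, and show that either both endpoints lie in $\desc[T]{v}$ or neither does. Split into two cases based on whether $u \in \desc[T]{v}$:

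\begin{itemize}
    \item If $u \in \desc[T]{v}$ and $u \neq v$, then Proposition~\ref{prop-both-u-parent-u-in-desc-v} directly gives $\parent[T]{u} \in \desc[T]{v}$, so both endpoints are inside.
    \item If $u \notin \desc[T]{v}$, then I would argue $\parent[T]{u} \notin \desc[T]{v}$ by contradiction. Suppose $\parent[T]{u} \in \desc[T]{v}$. If $\parent[T]{u} = v$, then $u$ is a child of $v$ and hence $u \in \desc[T]{v}$, a contradiction. Otherwise $\parent[T]{u}$ is a proper descendant of $v$, and since descendant sets are closed under taking children (any child of a descendant of $v$ is again a descendant of $v$), we again conclude $u \in \desc[T]{v}$, contradiction.
\end{itemize}

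Combining the two cases, every tree edge other than $\treeedge[T]{v}$ has both endpoints on the same side of the partition $(\desc[T]{v}, V\setminus \desc[T]{v})$ and so is not in $\delta(\desc[T]{v})$, which yields $\delta(\desc[T]{v}) \cap E(T) = \{\treeedge[T]{v}\}$. I do not anticipate a real obstacle here; the only subtlety is making the closure-under-children step fully explicit, which could either be done by a short induction on the path from $u$ up to the root or folded into a brief restatement of what ``descendant'' means in a rooted tree.
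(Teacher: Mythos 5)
Your proof is correct and follows the same structure as the paper's: establish that $\treeedge[T]{v}$ lies in the cut, then show every other tree edge has both endpoints on the same side of the partition. It is in fact slightly more careful than the paper's version, since you explicitly handle the case $u\notin\desc[T]{v}$ (via closure of descendant sets under taking children), whereas the paper leans entirely on Proposition~\ref{prop-both-u-parent-u-in-desc-v}, which as stated only directly covers the case $u\in\desc[T]{v}$.
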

\begin{proof}
By definition, $v \in \desc[T]{v}$ and $\parent[T]{v}\notin \desc[T]{v}$, hence $\treeedge[T]{v} \in \desc[T]{v}$. Now for this proof, we need to show that for $u \neq v$, $\treeedge[T]{u} \notin \desc[T]{v}$. Suppose not. Then  both $u,\parent[T]{u}$ are not in $\desc[T]{v}$ simultaneously. But this cannot be true (see Proposition \ref{prop-both-u-parent-u-in-desc-v}). 
\end{proof}
Now we give a proposition that finds the tree edges in  $\delta(\desc[T]{v_1} \oplus \desc[T]{v_2} \oplus\ldots \oplus \desc[T]{v_k})$, for any $v_1,v_2,\ldots,v_k \in V$.
\begin{prop}
	Let $T$ be a rooted spanning tree. Let $S = \curly{v_1,\ldots,v_k} \subset V \setminus r_T$. Then $E(T) \cap  \delta(\desc[T]{v_1}  \oplus \desc[T]{v_2} \oplus \cdots \oplus \desc[T]{v_k}) = \curly{\treeedge[T]{v_1},\ldots,\treeedge[T]{v_k}}$.
	\label{claim-8a}
\end{prop}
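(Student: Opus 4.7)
The plan is to chain together Lemma~\ref{cutspace-vector-space} and Proposition~\ref{prop-only-tree-edge} via the fact that intersection distributes over symmetric difference. First, by Lemma~\ref{cutspace-vector-space}, the cut on the left-hand side can be rewritten as
\[
\delta\!\left(\bigoplus_{i=1}^k \desc[T]{v_i}\right) \;=\; \bigoplus_{i=1}^k \delta(\desc[T]{v_i}).
\]
Intersecting both sides with $E(T)$ reduces the problem to identifying the tree-edge part of each individual $\delta(\desc[T]{v_i})$, provided that intersection commutes with $\oplus$.

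That commutation is the only auxiliary fact I would need to spell out: for any sets $A_1,\ldots,A_k$ and $B$, $(\bigoplus_i A_i) \cap B = \bigoplus_i (A_i \cap B)$. This follows directly from the odd-multiplicity characterization of $\oplus$ recalled in the preliminaries---an element $a$ lies in $(\bigoplus_i A_i) \cap B$ iff $a \in B$ and $a$ appears in an odd number of the $A_i$, equivalently in an odd number of the $A_i \cap B$. Applied to our situation with $B = E(T)$, this gives
\[
E(T) \cap \delta\!\left(\bigoplus_{i=1}^k \desc[T]{v_i}\right) \;=\; \bigoplus_{i=1}^k \bigl(\delta(\desc[T]{v_i}) \cap E(T)\bigr).
\]

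Next, I would invoke Proposition~\ref{prop-only-tree-edge} to replace each $\delta(\desc[T]{v_i}) \cap E(T)$ by the singleton $\{\treeedge[T]{v_i}\}$. Thus the right-hand side becomes $\bigoplus_{i=1}^k \{\treeedge[T]{v_i}\}$. Finally, since the $v_i$ are pairwise distinct elements of the set $S$, and since distinct non-root vertices have distinct parent edges (the map $v \mapsto \treeedge[T]{v}$ is injective on $V \setminus r_T$), the singletons $\{\treeedge[T]{v_i}\}$ are pairwise disjoint; hence by Proposition~\ref{simple-prop} their symmetric difference equals their union $\{\treeedge[T]{v_1},\ldots,\treeedge[T]{v_k}\}$, which is exactly the desired conclusion.

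I do not foresee a real obstacle: the argument is a three-line chain of identities once the ``intersection distributes over $\oplus$'' observation is in hand. The one small bookkeeping point I would be careful about is not to leave the distributivity step implicit; if the paper has not already recorded it, I would phrase it as a one-line remark just before the main computation so that the reader does not have to re-derive it from the odd-multiplicity definition on the fly.
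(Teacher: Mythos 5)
Your proof is correct and follows essentially the same route as the paper's: apply Lemma~\ref{cutspace-vector-space}, then Proposition~\ref{prop-only-tree-edge}, and observe that each $\treeedge[T]{v_i}$ survives exactly once in the symmetric difference. The only difference is one of explicitness --- you record the ``intersection distributes over $\oplus$'' fact and the injectivity of $v\mapsto\treeedge[T]{v}$ as separate steps, whereas the paper compresses both into the informal remark that ``each one of these occurs only once, and no other edge in $E(T)$ is in the set $\delta(\desc[T]{v_i})$.''
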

\begin{proof}
From Lemma \ref{cutspace-vector-space}, $\delta(\desc[T]{v_1}  \oplus \desc[T]{v_2} \oplus \cdots \oplus \desc[T]{v_k}) = \delta(\desc[T]{v_1} )   \oplus \delta( \desc[T]{v_2})   \oplus \cdots  \oplus \delta(\desc[T]{v_k})$. From Proposition \ref{prop-only-tree-edge}, $\delta(\desc[T]{v}) \cap E(T) = \treeedge[T]{v}$. Hence, \sloppy$\curly{\treeedge[T]{v_1},\ldots,\treeedge[T]{v_k}}$ are the only tree edge that survives in $\delta(\desc[T]{v_1} )   \oplus \delta( \desc[T]{v_2})   \oplus \ldots  \oplus \delta(\desc[T]{v_k})$ because each one of these occurs only once, and no other edge in $E(T)$ is in the set $\delta(\desc[T]{v_i})$.
\end{proof}

%\begin{prop}
%	Let $T$ be a rooted spanning tree. Let $S = \curly{v_1,\ldots,v_k} \subset V \setminus r_T$. Then 
%	$\curly{\treeedge[T]{v_1},\ldots,\treeedge[T]{v_k}} \subseteq \delta(\desc[T]{v_1}) \oplus \delta(\desc[T]{v_2}) \oplus\ldots \oplus \delta(\desc[T]{v_k})$. Further, $\curly{\treeedge[T]{v_1},\ldots,\treeedge[T]{v_k}}$ are the only tree edges which are in $\delta(\desc[T]{v_1}) \oplus \delta(\desc[T]{v_2}) \oplus\ldots \oplus \delta(\desc[T]{v_k})$.
%	\label{claim-8}
%\end{prop}
%%\Rev{I don't understand the reason why Claim 7 holds. In line 125, it shows that $A = \oplus_{v \in A}(v^{\downarrow T} \oplus_{c\in children_{T}(v) c^{\downarrow T}})$ holds. Therefore, $A = \oplus_{v \in A} v$ holds for using Claim 6. However, it does not mean that$A = v_{1}^{\downarrow T} \oplus ... \oplus v_{k}^{\downarrow T}$ always holds.}
%%\mohit{add the reasoning. make it more clear.}
%\begin{proof}
%	For any $u \in V\setminus r_T$, the *only* tree edge in $\delta(\desc[T]{u})$ is $\treeedge[T]{u}$. Hence in the symmetric difference of the sets $\delta(\desc[T]{v_1}), \delta(\desc[T]{v_2}), \ldots, \delta(\desc[T]{v_k})$, the edge set $\curly{\treeedge[T]{v_1},\ldots,\treeedge[T]{v_k}}$ survives. This is because each one of these edges are only in one set. Also, these are all the tree edges in $\delta(\desc[T]{v_1}) \oplus \delta(\desc[T]{v_2}) \oplus\ldots \oplus \delta(\desc[T]{v_k})$ because no other tree edge is in the sets whose symmetric difference is considered.
%\end{proof}
\begin{prop}
For any $A\subset V$, if $\delta(A) \cap E(T) = \{\treeedge[T]{v_1},\treeedge[T]{v_2},\ldots,\treeedge[T]{v_k}\}$, then either A, or $V\setminus A$ is equal to $v_1^\downarrow \oplus  v_2^\downarrow \oplus \cdots \oplus  v_k^\downarrow$.
\label{prop-sec3-main}
\end{prop}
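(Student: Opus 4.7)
The plan is to treat this as a straightforward bookkeeping combination of Propositions \ref{prop-A-can-be-written} and \ref{claim-8a}, with one small injectivity check to finish. Without loss of generality, I would first assume $r_T \notin A$; the other case reduces to this one by replacing $A$ with $V\setminus A$ (which has the same cut).

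Next, I would apply Proposition \ref{prop-A-can-be-written} to write $A = \bigoplus_{v \in S'} \desc[T]{v}$ for some set $S' \subseteq V \setminus r_T$. Then by Proposition \ref{claim-8a} applied to this representation, we have
\[
E(T) \cap \delta(A) \;=\; E(T) \cap \delta\!\left(\bigoplus_{v \in S'} \desc[T]{v}\right) \;=\; \{\treeedge[T]{v} : v \in S'\}.
\]
Comparing this with the hypothesis $E(T)\cap \delta(A) = \{\treeedge[T]{v_1}, \ldots, \treeedge[T]{v_k}\}$, I would conclude that these two sets of tree edges coincide.

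The only subtle point is that I must then deduce $S' = \{v_1, \ldots, v_k\}$ as vertex sets, not just as sets of edges. This follows because the map $v \mapsto \treeedge[T]{v}$ from $V \setminus r_T$ to $E(T)$ is injective: each non-root vertex is the unique lower endpoint of its parent edge, so distinct vertices yield distinct tree edges. Hence $S'$ and $\{v_1,\ldots,v_k\}$ are the same set of vertices, giving $A = \bigoplus_{i=1}^{k} \desc[T]{v_i}$, as required.

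I do not expect a real obstacle here; the whole proposition is essentially a combination of the previously established facts plus the observation that the parent-edge map is injective. The only thing to be careful about is the initial case split on whether $r_T \in A$, since Proposition \ref{prop-A-can-be-written} is stated for the side of the cut that does not contain $r_T$.
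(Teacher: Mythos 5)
Your proof is correct and takes essentially the same route as the paper: apply Proposition \ref{prop-A-can-be-written} to express $A$ (or $V\setminus A$) as an XOR of descendant sets, then Proposition \ref{claim-8a} to identify the tree edges in the resulting cut and match them against the hypothesis. The only difference is that you make the injectivity of $v\mapsto\treeedge[T]{v}$ explicit when deducing $S'=\{v_1,\ldots,v_k\}$, whereas the paper leaves this implicit in a terse contrapositive ("if $S\neq\{v_1,\ldots,v_k\}$ then the tree-edge sets would differ").
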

\begin{proof}
We know that either $A$, or $V\setminus A$ can be written as $\bigoplus_{v\in S} \desc[T] {v}$ (see Proposition \ref{prop-A-can-be-written}).  Also, $\delta(A) = \delta(V\setminus A)$. We claim that $S = \curly{v_1,v_2,\ldots,v_k}$, because if it is not, then $\delta(A) \cap E(T) \neq \{\treeedge[T]{v_1},\treeedge[T]{v_2},\ldots,\treeedge[T]{v_k}\}$ (Proposition \ref{claim-8a}).
\end{proof}
\begin{proof}[Proof of Theorem \ref{thm:main_charac_lemma}]
According to the given condition, and Proposition \ref{prop-sec3-main} $\delta(A) = \delta(v_1^\downarrow \oplus  v_2^\downarrow \oplus \cdots \oplus  v_k^\downarrow) = \delta(\desc[T]{v_1}) \oplus \delta(\desc[T]{v_2}) \oplus \cdots \oplus \delta(\desc[T]{v_k})$. Using Proposition \ref{set-theory-result} concludes the proof.
\end{proof}
%\begin{lemma}
%	Let $T$ be a spanning tree and $A\subset V$. Suppose for some vertices $S = \curly{v_1,\ldots,v_k}$, $\curly{\treeedge[T]{v_1},\ldots,\treeedge[T]{v_k}}$ are all the tree edges which are part of the cut-set $\delta(A)$. Then                                                                                                  
%	\begin{align*}
%	|\delta(A)| = |\delta(\desc[T]{v_1} \oplus \desc[T]{v_2} \oplus\ldots \oplus \desc[T]{v_k})|  &= 
%	|\delta(\desc[T]{v_1}) \oplus \delta(\desc[T]{v_2}) \oplus\ldots \oplus \delta(\desc[T]{v_k})|  \\
%	&= \sum_{l=1}^k(-1)^{l-1}2^{l-1}\sum_{S\subseteq[k]\atop|S|=l}\left|\bigcap_{i\in S}\delta(\desc[T]{v_i})\right|
%	\end{align*}
	
%	Expanding the above, we have
%	\begin{align*}
%	|\delta(\desc[T]{v_1}) \oplus \delta(\desc[T]{v_2}) \oplus\ldots \oplus \delta(\desc[T]{v_k})| 
%	&= \sum\limits_{\forall i_1, 1 \leq i_1 \leq k} |\delta(\desc[T]{v_{i_1}})|\\
%	&- 2\sum\limits_{\forall i_1,i_2\ 1 \leq i_1 < i_2 \leq k} |\delta(\desc[T]{\curly{v_{i_1}}})\cap \delta(\desc[T]{\curly{v_{i_2}}})| \\
%	& + 4\sum\limits_{\forall i_1,i_2,i_3\ 1 \leq i_1 < i_2 < i_3 \leq k} |\delta(\desc[T]{\curly{v_{i_1}}})\cap\delta(\desc[T]{\curly{v_{i_2}}})\cap \delta(\desc[T]{\curly{v_{i_3}}})|\\
%	&- 8 \ldots\\
%	\end{align*}
%	\label{lemma:main_charac_lemma}
%\end{lemma}
\section{Proof of Theorem \ref{thm:main_sec_2}}
We prove Theorem \ref{thm:main_sec_2} in this section. For any vertex sets $A_1,A_2,A_3,\ldots A_i \subseteq V$ we define $\gamma(A_1,A_2,\ldots,A_i) \triangleq	 |\delta(A_1) \cap \delta(A_2)\cap\ldots\cap\delta(A_i)|$. 
For example $\gamma(A) = |\delta(A)|$, $\gamma(A_1,A_2) = |\delta(A_1) \cap \delta(A_2)|$, $\gamma(A_1,A_2,A_3) = |\delta(A_1)\cap \delta(A_2) \cap \delta(A_3)|$. Note that $\gamma(\cdot)$ is an overloaded function. 

The issue with Theorem \ref{thm:main_charac_lemma} is the involvement of \emph{$k$-wise gamma} values, that is $\gamma(A_1,\ldots,A_k)$ for $k$ different vertex sets. We give the following lemma which states that pair-wise gamma values are enough to compute these $k$-wise gamma values, when our vertex sets are derived from a spanning tree $T$, and are of the form $\desc[T]{x}$ for any $x \in V$. This combined with Theorem \ref{thm:main_charac_lemma} proves Theorem \ref{thm:main_sec_2}.
\begin{lem}
Let $T$ be any spanning tree. Let $S = \curly{x_{1},x_{2},\ldots,x_{k}}$ be a subset of $V\setminus \{r_T\}$, where $k\geq 2$. Then $\gamma(\desc[T]{{x_{1}}},\desc[T]{{x_{2}}},\ldots,\desc[T]{{x_{k}}})$ is either equal to $0$ or $\gamma(\desc[T]{x_i},\desc[T]{x_j})$, for some $x_i,x_j \in S$. 	\label{lemma:pair_wise_info_is_enough}
\end{lem}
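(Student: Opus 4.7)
\medskip
\noindent\textbf{Proof plan.} The plan is to analyze, for each edge $e \in E$, when $e$ lies in $\bigcap_{i=1}^k \delta(\desc[T]{x_i})$, and to show that this intersection either is empty or coincides with a specific pairwise intersection $\delta(\desc[T]{x_i}) \cap \delta(\desc[T]{x_j})$ for some $x_i,x_j \in S$.

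First, I would dispose of tree edges: by Proposition~\ref{prop-only-tree-edge}, the tree edge $\treeedge[T]{y}$ lies in $\delta(\desc[T]{x})$ only when $x=y$, so for $k\ge 2$ no tree edge can lie in all $k$ cuts. For a non-tree edge $e=uv$, setting $w=\LCA(u,v)$, I would show that $e\in\delta(\desc[T]{x})$ iff $x$ is a strict descendant of $w$ lying on the tree-path from $u$ to $w$ or from $v$ to $w$ (excluding $w$ itself). Call this vertex set $\mathcal{P}_{uv}$; it is a disjoint union of two ancestor-chains, one lying in each of the two subtrees of $w$ that contain $u$ and $v$ respectively. Hence $e$ is counted by $\gamma_k:=\gamma(\desc[T]{x_1},\ldots,\desc[T]{x_k})$ if and only if $S\subseteq\mathcal{P}_{uv}$.

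The heart of the argument is a structural dichotomy on $S$, driven by the tree-LCA $w^*:=\LCA(x_1,\ldots,x_k)$. If no edge $uv$ admits $S\subseteq\mathcal{P}_{uv}$, then $\gamma_k=0$ and we are done. Otherwise, exactly one of two structures applies: (a) $S$ is a single ancestor-chain in $T$, in which case I pick $x_{\min}$ (the deepest) and $x_{\max}$ (the shallowest) in $S$; or (b) $w^*\notin S$ and $S$ partitions into two chains $S_1,S_2$ lying in two distinct subtrees of $w^*$, in which case I pick $y_i$ as the deepest element of $S_i$ for $i=1,2$.

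Finally I would verify that the chosen pair captures $\gamma_k$ exactly. In case (a), since $\desc[T]{x_{\min}}\subseteq\desc[T]{x_{\max}}$, an edge lies in the $k$-wise intersection iff it has one endpoint in $\desc[T]{x_{\min}}$ and the other outside $\desc[T]{x_{\max}}$, which is precisely $\gamma(\desc[T]{x_{\min}},\desc[T]{x_{\max}})$; any intermediate $x \in S$ automatically lies in the relevant arm, so no constraint is lost. In case (b), since $\desc[T]{y_1}$ and $\desc[T]{y_2}$ are disjoint, the edges counted are exactly those with one endpoint in each, giving $\gamma(\desc[T]{y_1},\desc[T]{y_2})$; again every other $x\in S_i$ is an ancestor of $y_i$ in $T$, so the corresponding cut condition is automatic. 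The main obstacle is the structural case analysis: verifying that every configuration failing the dichotomy---three or more $w^*$-branches meeting $S$, or incomparable elements inside a single $w^*$-branch, or $w^*\in S$ with $S$ not a chain---genuinely forces $S\not\subseteq\mathcal{P}_{uv}$ for every choice of $u,v$, so that $\gamma_k=0$ in all these residual cases.
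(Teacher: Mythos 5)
Your proposal is correct, and it takes a genuinely different route from the paper. You work \emph{per edge}: for a non-tree edge $uv$ with $w=\LCA(u,v)$, you characterize the set $\mathcal{P}_{uv}$ of vertices $x$ with $uv\in\delta(\desc[T]{x})$ as the disjoint union of two ancestor-chains hanging below $w$, and then argue that $uv$ contributes to $\gamma_k$ iff $S\subseteq\mathcal{P}_{uv}$. This forces $S$ itself to be either a single ancestor-chain, or two chains living in distinct subtrees of $\LCA(S)$ with $\LCA(S)\notin S$; any other $S$ gives $\gamma_k=0$ immediately, and in the two surviving configurations you can read off the witnessing pair (deepest/shallowest in the chain case; the two deepest branch-representatives in the forked case) and verify exactness directly. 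The paper instead runs a four-way case split on $S$ (Figure~\ref{fig-four-different-cases} and Propositions~\ref{obs:gamma_1}--\ref{obs:gamma_4}): two cases give $\gamma_k=0$ outright (\CASE{1}: all pairwise independent, \CASE{3}: a common ancestor exists in $S$ but $S$ is not a chain); \CASE{2} (a chain) collapses $\gamma_k$ to a pairwise value by a telescoping nesting argument; and \CASE{4} (no element of $S$ dominates all others) removes one vertex $a\in S$ and recurses on $k-1$. Your forked case (b) corresponds to the terminal configuration the paper's \CASE{4} recursion eventually reaches, but you jump there in one step and name the resulting pair explicitly, whereas the paper reaches it by peeling off shallow vertices one at a time. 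The trade-off: the paper's decomposition makes each of the four propositions locally easy to check and fits the recursive framing of the surrounding text, while your edge-level LCA argument is non-recursive, gives a closed-form choice of $(x_i,x_j)$, and makes the exhaustiveness of the dichotomy transparent (because $\mathcal{P}_{uv}$ can never be more than two chains below a single vertex). The one place you flagged as ``the main obstacle''---ruling out all residual configurations---is indeed where the paper spends two of its four propositions, but your structural observation that $\mathcal{P}_{uv}$ is always a union of at most two chains in two subtrees of one vertex already settles it, since $S\subseteq\mathcal{P}_{uv}$ directly excludes three-way branching at $\LCA(S)$, incomparable pairs inside one branch, and $\LCA(S)\in S$ with branching below it. Your proof is complete modulo writing out that sentence.
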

We describe the proof in this section. 
We give four exhaustive cases based on the ancestor-descendent relationships of the vertices in S with respect to the spanning tree $T$. They are shown in Figure. \ref{fig-four-different-cases}.
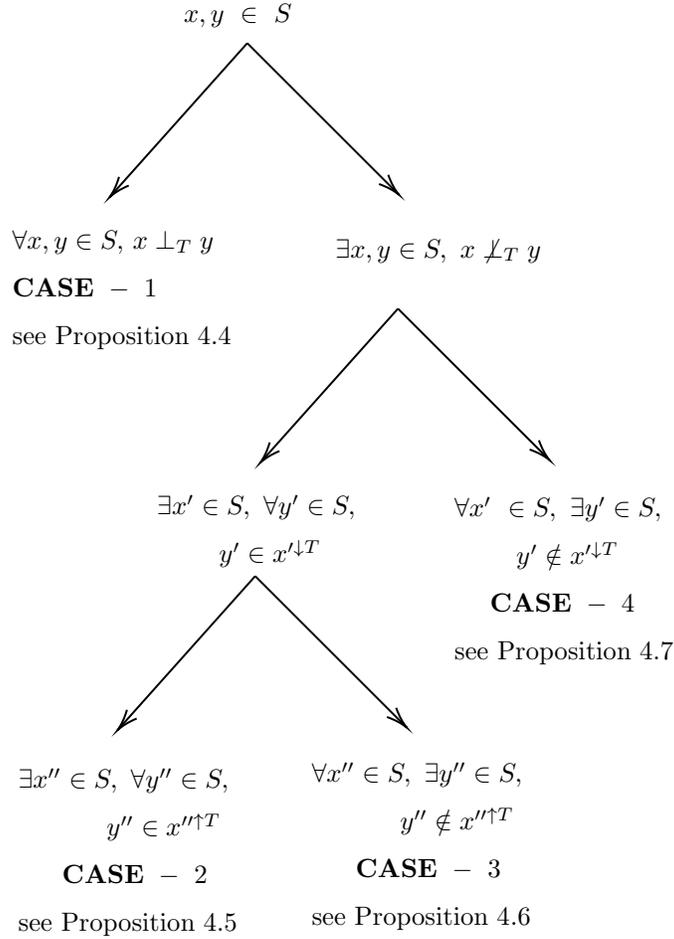
\begin{figure}[ht]
\centering
\tikzset{every picture/.style={line width=0.75pt}} %set default line width to 0.75pt        

\begin{tikzpicture}[x=0.75pt,y=0.75pt,yscale=-1,xscale=1]
%uncomment if require: \path (0,472); %set diagram left start at 0, and has height of 472

%Straight Lines [id:da14601044999783652] 
\draw    (129.5,26.66) -- (61.33,103) ;
\draw [shift={(60,104.49)}, rotate = 311.76] [color={rgb, 255:red, 0; green, 0; blue, 0 }  ][line width=0.75]    (10.93,-3.29) .. controls (6.95,-1.4) and (3.31,-0.3) .. (0,0) .. controls (3.31,0.3) and (6.95,1.4) .. (10.93,3.29)   ;
%Straight Lines [id:da007022079985875562] 
\draw    (129.5,26.66) -- (204.59,101.74) ;
\draw [shift={(206,103.16)}, rotate = 225] [color={rgb, 255:red, 0; green, 0; blue, 0 }  ][line width=0.75]    (10.93,-3.29) .. controls (6.95,-1.4) and (3.31,-0.3) .. (0,0) .. controls (3.31,0.3) and (6.95,1.4) .. (10.93,3.29)   ;
%Straight Lines [id:da47762347902577407] 
\draw    (205.5,160.66) -- (137.33,237) ;
\draw [shift={(136,238.49)}, rotate = 311.76] [color={rgb, 255:red, 0; green, 0; blue, 0 }  ][line width=0.75]    (10.93,-3.29) .. controls (6.95,-1.4) and (3.31,-0.3) .. (0,0) .. controls (3.31,0.3) and (6.95,1.4) .. (10.93,3.29)   ;
%Straight Lines [id:da18957433415521208] 
\draw    (205.5,160.66) -- (280.59,235.74) ;
\draw [shift={(282,237.16)}, rotate = 225] [color={rgb, 255:red, 0; green, 0; blue, 0 }  ][line width=0.75]    (10.93,-3.29) .. controls (6.95,-1.4) and (3.31,-0.3) .. (0,0) .. controls (3.31,0.3) and (6.95,1.4) .. (10.93,3.29)   ;
%Straight Lines [id:da45694531231397373] 
\draw    (133.5,295.66) -- (65.33,372) ;
\draw [shift={(64,373.49)}, rotate = 311.76] [color={rgb, 255:red, 0; green, 0; blue, 0 }  ][line width=0.75]    (10.93,-3.29) .. controls (6.95,-1.4) and (3.31,-0.3) .. (0,0) .. controls (3.31,0.3) and (6.95,1.4) .. (10.93,3.29)   ;
%Straight Lines [id:da1203672149535695] 
\draw    (133.5,295.66) -- (208.59,370.74) ;
\draw [shift={(210,372.16)}, rotate = 225] [color={rgb, 255:red, 0; green, 0; blue, 0 }  ][line width=0.75]    (10.93,-3.29) .. controls (6.95,-1.4) and (3.31,-0.3) .. (0,0) .. controls (3.31,0.3) and (6.95,1.4) .. (10.93,3.29)   ;

% Text Node
\draw (96,5.4) node [anchor=north west][inner sep=0.75pt]    {${\textstyle x,y\ \in \ S}$};
% Text Node
\draw (3,112.4) node [anchor=north west][inner sep=0.75pt]    {$ \begin{array}{l}
{\textstyle \forall x,y\in S,}
{\textstyle\ x\perp_T y}\\
{\textstyle \mathbf{CASE\ } -\ 1}\\
\text{see Proposition \ref{obs:gamma_1}}
\end{array}$};
% Text Node
\draw (166,116.4) node [anchor=north west][inner sep=0.75pt]    {$ \begin{array}{l}
\exists x,y\in S,\ x\not\perp_T y
\end{array}$};
% Text Node
\draw (76,246.4) node [anchor=north west][inner sep=0.75pt]    {$ \begin{array}{l}
\exists x'\in S,\ \forall y'\in S,\ \\
\ \ \ \ \ \ \ y'\in x^{\prime \downarrow T }
\end{array}$};
% Text Node
\draw (226,247.4) node [anchor=north west][inner sep=0.75pt]    {$ \begin{array}{l}
\forall x'\ \in S,\ \exists y'\in S,\\
\ \ \ \ \ \ \ y'\notin x^{\prime \downarrow T}\\
\ \ \ \ \mathbf{CASE} \ -\ 4\\
\text{see Proposition \ref{obs:gamma_4}}
\end{array}$};
% Text Node
\draw (6,384.4) node [anchor=north west][inner sep=0.75pt]    {$ \begin{array}{l}
\exists x''\in S,\ \forall y''\in S,\ \\
\ \ \ \ \ \ \ \ \ \ y''\in x'^{\prime \uparrow T}\\
\ \ \ \ \ \mathbf{CASE} \ -\ 2\\
\text{see Proposition \ref{obs:gamma_2}}
\end{array}$};
% Text Node
\draw (154,381.4) node [anchor=north west][inner sep=0.75pt]    {$ \begin{array}{l}
\forall x''\in S,\ \exists y''\in S,\ \\
\ \ \ \ \ \ \ \ \ \ y''\notin x'^{\prime \uparrow T}\\
\ \ \ \ \ \mathbf{CASE} \ -\ 3\\
\text{see Proposition \ref{obs:gamma_3}}
\end{array}$};
\end{tikzpicture}
\caption{The figure represents the four different cases.}
\label{fig-four-different-cases}
\end{figure}
We show that 
the {$k$ wise gamma} value is zero in two of these cases: \CASE{1} and \CASE{3}. 
Also, in the remaining two cases: \CASE{2} and \CASE{4}, the {$k$ wise gamma} value can be written in terms of pair-wise gamma value and {$k-1$ wise gamma} value respectively, for any $k\geq 3$. Using recursion, this implies that {$k$ wise gamma} value can be found from {pair-wise gamma} value
if we know the ancestor-decedent relationship of the vertices. 

The cases in  Figure \ref{fig-four-different-cases} are based on the ancestor-decedent relationship of all $x,y \in S$. Recall that $x\perp_T y$, if $\desc[T]{x} \cap \desc[T]{y} = \emptyset$. We show the following simple observation. This shall enable the reader to get familiarized with the $\perp_T$ operator.
\begin{prop}
	Let $T$ be a rooted spanning tree. Let $x,y$ be any nodes. If $x \not\perp_T y$, then either $\desc[T]{x} \subset \desc[T]{y}$ or $\desc[T]{y} \subset \desc[T]{x}$. Further, if $x\not\perp_T y$, and $\level[T]{x} < \level[T]{y}$, then $\desc[T]{y} \subset \desc[T]{x}$.
	\label{prop-simple}
\end{prop}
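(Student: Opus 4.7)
The plan is to turn $x \not\perp_T y$ into a concrete witness vertex and then appeal to the standard fact that the ancestors of any fixed vertex in a rooted tree form a chain. By definition, $x \not\perp_T y$ means $\desc[T]{x} \cap \desc[T]{y} \neq \emptyset$, so I would begin by picking some $z \in \desc[T]{x} \cap \desc[T]{y}$. By the definition of $\desc[T]{\cdot}$, both $x$ and $y$ then lie on the unique path in $T$ from $r_T$ to $z$.

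From here, since $T$ is a tree, the root-to-$z$ path is unique, so the set of ancestors of $z$ is totally ordered by the ancestor-descendant relation. Hence either $x$ is an ancestor of $y$ or $y$ is an ancestor of $x$. In the first case, every descendant of $y$ is by transitivity also a descendant of $x$, so $\desc[T]{y} \subseteq \desc[T]{x}$; the second case is symmetric. After noting that we may assume $x \neq y$ (otherwise the claim is vacuous under the containment convention used in the paper), at least one of these containments is strict, which proves the first part.

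For the ``further'' part, the hypothesis $\level[T]{x} < \level[T]{y}$ already forces $x \neq y$, and since $\level[T]{\cdot}$ is strictly monotone along root-to-leaf paths in $T$, it rules out $y$ being an ancestor of $x$ (an ancestor has strictly smaller level than its proper descendant). Combined with the dichotomy established above, this forces $x$ to be a strict ancestor of $y$, and therefore $\desc[T]{y} \subset \desc[T]{x}$, as required.

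This proposition is essentially just an unpacking of the definitions of $\desc[T]{\cdot}$, $\perp_T$, and $\level[T]{\cdot}$, so I do not expect any real obstacle. The single delicate point worth articulating explicitly is why any two ancestors of $z$ must be comparable, and as noted above this is immediate from the uniqueness of paths in a tree.
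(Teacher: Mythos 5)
Your proof is correct and follows essentially the same route as the paper's: both arguments deduce from $\desc[T]{x}\cap\desc[T]{y}\neq\emptyset$ that $x$ and $y$ stand in an ancestor--descendant relation, then use the level comparison to orient it. You are somewhat more careful than the paper in that you extract an explicit witness $z$, invoke uniqueness of the root-to-$z$ path to justify comparability, and flag the $x=y$ edge case for strict containment, but none of this changes the substance of the argument.
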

\begin{proof}
If $x\not\perp_T y$,  then  $\desc[T]{x} \cap \desc[T]{y} \neq \emptyset$. 
 The set 
 $\desc[T]{x}$ contains $x$ 
 and its decedents in the rooted spanning tree $T$. If the intersection of $\desc[T]{x}$ and $\desc[T]{y}$ is not empty, it means that $x$ and $y$ have an ancestor-decedent relationship in the tree $T$. Thus $\desc[T]{x} \subset \desc[T]{y}$ or $\desc[T]{y} \subset \desc[T]{x}$. When $\level[T]{x} < \level[T]{y}$, 
 then $y$ is a descendant of $x$, hence $\desc[T]{y} \subset \desc[T]{x}$
\end{proof}

We describe the four exhaustive cases from Figure \ref{fig-four-different-cases}.  In \CASE{1},  $x \perp_T y$ $\forall x,y\in S$. 
In Proposition \ref{obs:gamma_1}, 
we prove
\sloppy$\gamma(\desc[T]{{x_{1}}},\desc[T]{{x_{2}}},\ldots,\desc[T]{{x_{k}}}) = 0$ for this case. 
\CASE{2,3,4} are negation of \CASE{1}. In these cases, $\exists x,y$ such that $x \not\perp_T y$. Further, these cases are divided into two groups based on the statement: $\exists x'\in S$  such that $\forall y'\in S$, $y' \in x'^{\downarrow T}$ ($y'$ is a descendent of $x'$). If this statement is false then it is \CASE{4}, if it is true, then it is one of \CASE{2} or \CASE{3}. For \CASE{4}, in Proposition \ref{obs:gamma_4} we show that k-wise gamma value can be found through $k-1$ wise gamma value.
Lastly, \CASE{2,3} distinguish between each other based on the statement:  $\exists x'' \in S$ such that all $y''\in S$ are on the tree path from $r_T$ to $x''$ ($y'' \in x''^{\uparrow T}$). If this is true, then this is \CASE{2}, and when it is false it is \CASE{3}. In Proposition \ref{obs:gamma_2}, we show that for \CASE{2}, k-wise gamma value can be found through pair-wise gamma value. For \CASE{3}, in Proposition \ref{obs:gamma_3}, we show that the k-wise gamma value is 0. 
Based on the aforementioned discussion, we state the following lemma.
\begin{lem}
Let $T$ be any spanning tree. Let $S = \curly{x_{1},x_{2},\ldots,x_{k}}$ be a subset of $V\setminus \{r_T\}$, where $k\geq 2$. If we know $\gamma(\desc[T]{x_i},\desc[T]{x_j})$ for all $x_i,x_j \in S$, and the ancestor decedent relationship between $x_i$ and $x_j$ for all $x_i,x_j \in S$, then $\gamma(\desc[T]x_{1},\desc[T]x_{2},\ldots,\desc[T]x_{k})$ can be found.
\end{lem}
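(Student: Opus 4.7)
The plan is to argue by strong induction on $k$, leaning on the four-case decomposition illustrated in Figure \ref{fig-four-different-cases}. The base case $k = 2$ is immediate, since $\gamma(\desc[T]{x_1}, \desc[T]{x_2})$ is literally one of the pairwise inputs we are given.

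For the inductive step with $k \geq 3$, I would first inspect the ancestor-descendant relationships of the vertices in $S$. These are part of the input, and the defining conditions of \CASE{1} through \CASE{4} are phrased purely in terms of those relationships (independence $\perp_T$, being a descendant, being on the root-path), so we can decide which case holds without looking at any gamma values. In \CASE{1} and \CASE{3}, Propositions \ref{obs:gamma_1} and \ref{obs:gamma_3} immediately give $\gamma(\desc[T]{x_1},\ldots,\desc[T]{x_k}) = 0$. In \CASE{2}, Proposition \ref{obs:gamma_2} equates the $k$-wise gamma with a specific pairwise value $\gamma(\desc[T]{x_i}, \desc[T]{x_j})$ for suitable $x_i, x_j \in S$, and that value is already in our hand. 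The remaining case, \CASE{4}, is handled by Proposition \ref{obs:gamma_4}, which rewrites the $k$-wise gamma as a $(k-1)$-wise gamma on a subset $S' \subsetneq S$; since the pairwise gammas and the ancestor-descendant data on $S'$ are inherited from those on $S$, the induction hypothesis applies and finishes the step.

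The main obstacle is the \CASE{4} reduction encoded in Proposition \ref{obs:gamma_4}: one has to verify that the $(k-1)$-wise gamma it produces is genuinely of the form $\gamma(\desc[T]{y_1},\ldots,\desc[T]{y_{k-1}})$ for some $y_1,\ldots,y_{k-1} \in V \setminus \{r_T\}$ all drawn from (or determined by) $S$, so that all the pairwise gammas and ancestor-descendant relationships needed for the recursive call are already available. Once that is in place, the lemma itself is just a dispatch table: read the relationships, detect the case, and either return $0$, look up the correct pairwise value, or recurse on a strictly smaller set. Modulo Propositions \ref{obs:gamma_1}--\ref{obs:gamma_4}, the argument collapses into a single clean case split driven entirely by the tree structure.
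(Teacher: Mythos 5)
Your proposal is correct and mirrors the paper's own (implicitly stated) argument almost exactly: the paper likewise treats the lemma as a dispatch on the four mutually exhaustive cases of Figure~\ref{fig-four-different-cases}, returns $0$ in \CASE{1} and \CASE{3} via Propositions~\ref{obs:gamma_1} and~\ref{obs:gamma_3}, reads off a pairwise value in \CASE{2} via Proposition~\ref{obs:gamma_2}, and recurses on the smaller set produced by Proposition~\ref{obs:gamma_4} in \CASE{4}. Your one flagged concern — that the \CASE{4} reduction must land on a subset of $S$ so the recursive call inherits the needed inputs — is already settled by the statement of Proposition~\ref{obs:gamma_4}, which takes $\{y_1,\ldots,y_{k-1}\} = S\setminus\{a\}$ for an $a\in S$ identified purely from the ancestor-descendant data.
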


%In all the remaining cases
%	\item $\exists x,y \in S$ such that $x \not\perp_T y$
%	\begin{itemize}
%		\item $\exists x \in S$ such that $\forall y \in S\setminus\curly{x}$, $y \in \desc[T]{x}$
%			\begin{itemize}
%				\item $\exists x \in S$, 
%				such that $\forall y \in S$, $y$ 
%				belongs on the tree path between $r_T$ to $x$. (\CASE{2}, dealt in Proposition \ref{obs:gamma_2})
%				\item $\forall x \in S$, 
%				$\exists y\in S$, such that $y$ does not
%				belong on the tree path between $r_T$ to $x$. (\CASE{3}, dealt in Proposition \ref{obs:gamma_3})
%			\end{itemize}
%		\item $\forall x \in S$ such that $\exists y \in S\setminus\curly{x}$, $y \notin \desc[T]{x}$ and $\exists x,y\in S$ such that $x \not\perp_T y$. (\CASE{4}, dealt in Proposition \ref{obs:gamma_4})
%	\end{itemize}
%\end{itemize}
%\end{proof}
\begin{prop}
	\label{obs:gamma_1}
	Let $T$ be a rooted spanning tree and $S  = \curly{x_1,x_2,\ldots,x_k}\subset V \setminus \curly {r_T}$ such that $|S| = k \geq 3$.  If no child-ancestor pair exists in $S$ i.e. $\forall x_i,x_j \in S$, $x_i \perp_T x_j$ , then $ \gamma(\desc[T]x_1,\desc[T]x_2,\ldots,\desc[T]x_k) = 0$.
\end{prop}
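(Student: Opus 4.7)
The plan is to show that no edge can possibly belong to the intersection $\bigcap_{i=1}^k \delta(\desc[T]{x_i})$ when $k \geq 3$ and all pairs $x_i, x_j$ are independent with respect to $T$. I would prove this by considering an arbitrary candidate edge and counting how many cut sets it can simultaneously belong to.

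First, I would recall that an edge $e = (u,v)$ lies in $\delta(A)$ if and only if exactly one of $u, v$ belongs to $A$. Now suppose for contradiction that some edge $e = (u,v)$ lies in $\bigcap_{i=1}^k \delta(\desc[T]{x_i})$. By hypothesis, the sets $\desc[T]{x_1}, \desc[T]{x_2}, \ldots, \desc[T]{x_k}$ are pairwise disjoint (since $x_i \perp_T x_j$ means $\desc[T]{x_i} \cap \desc[T]{x_j} = \emptyset$). Consequently, the vertex $u$ belongs to at most one of these sets, and similarly for $v$.

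The main counting step is then as follows. Let $I_u = \{i : u \in \desc[T]{x_i}\}$ and $I_v = \{i : v \in \desc[T]{x_i}\}$; by pairwise disjointness, $|I_u| \leq 1$ and $|I_v| \leq 1$. For $e$ to belong to $\delta(\desc[T]{x_i})$, we need $i \in I_u \oplus I_v$ (in the sense of symmetric difference on indicator functions: exactly one of $u,v$ is inside). In particular, the set of indices $i$ with $e \in \delta(\desc[T]{x_i})$ has size at most $|I_u| + |I_v| \leq 2$. Since $k \geq 3$, there exists some index $i^\star$ with $e \notin \delta(\desc[T]{x_{i^\star}})$, contradicting $e \in \bigcap_{i=1}^k \delta(\desc[T]{x_i})$.

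Therefore the intersection $\delta(\desc[T]{x_1}) \cap \cdots \cap \delta(\desc[T]{x_k})$ is empty, and hence $\gamma(\desc[T]{x_1}, \ldots, \desc[T]{x_k}) = 0$. I do not expect any serious obstacle here; the only subtlety is being careful that an edge incident to two of the descendant sets still gets separated by exactly those two sets (and by none of the others), which the pairwise disjointness delivers cleanly. The threshold $k \geq 3$ is exactly what is needed so that two endpoints of an edge cannot cover all the cut sets.
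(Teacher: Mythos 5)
Your proof is correct and takes essentially the same approach as the paper: observing that pairwise independence makes the descendant sets disjoint, so an edge's two endpoints can meet at most two of them, and hence the edge cannot lie in all $k \geq 3$ cuts simultaneously. Your write-up via the index sets $I_u$ and $I_v$ is a slightly more careful rendering of the paper's one-line counting remark, but the idea is identical.
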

\begin{figure}[h]
	\centering
	\tikzset{every picture/.style={line width=0.75pt}} %set default line width to 0.75pt        

\begin{tikzpicture}[x=0.75pt,y=0.75pt,yscale=-1,xscale=1]
%uncomment if require: \path (0,139); %set diagram left start at 0, and has height of 139

%Straight Lines [id:da6528145492428679] 
\draw    (138.46,20.71) -- (87.75,84) ;
%Flowchart: Connector [id:dp7175624535296194] 
\draw  [fill={rgb, 255:red, 0; green, 0; blue, 0 }  ,fill opacity=1 ] (134.75,21.71) .. controls (134.75,20.22) and (135.97,19) .. (137.46,19) .. controls (138.96,19) and (140.18,20.22) .. (140.18,21.71) .. controls (140.18,23.21) and (138.96,24.43) .. (137.46,24.43) .. controls (135.97,24.43) and (134.75,23.21) .. (134.75,21.71) -- cycle ;
%Straight Lines [id:da05795274087577407] 
\draw    (113.11,52.36) -- (136.59,85.36) ;
%Shape: Triangle [id:dp756674925308974] 
\draw   (87.75,84) -- (108.75,118) -- (66.75,118) -- cycle ;
%Shape: Triangle [id:dp9176257052520445] 
\draw   (136.59,85.36) -- (156.5,117) -- (116.68,117) -- cycle ;
%Straight Lines [id:da45454638971772465] 
\draw [fill={rgb, 255:red, 155; green, 155; blue, 155 }  ,fill opacity=1 ]   (135.75,20.71) -- (191.32,85.71) ;
%Shape: Triangle [id:dp6669777984270882] 
\draw   (191.32,85.71) -- (213.05,117) -- (169.59,117) -- cycle ;
%Flowchart: Connector [id:dp1713630126939223] 
\draw  [fill={rgb, 255:red, 0; green, 0; blue, 0 }  ,fill opacity=1 ] (110.75,52.71) .. controls (110.75,51.22) and (111.97,50) .. (113.46,50) .. controls (114.96,50) and (116.18,51.22) .. (116.18,52.71) .. controls (116.18,54.21) and (114.96,55.43) .. (113.46,55.43) .. controls (111.97,55.43) and (110.75,54.21) .. (110.75,52.71) -- cycle ;
%Flowchart: Connector [id:dp012239722217227467] 
\draw  [fill={rgb, 255:red, 0; green, 0; blue, 0 }  ,fill opacity=1 ] (84.75,83.71) .. controls (84.75,82.22) and (85.97,81) .. (87.46,81) .. controls (88.96,81) and (90.18,82.22) .. (90.18,83.71) .. controls (90.18,85.21) and (88.96,86.43) .. (87.46,86.43) .. controls (85.97,86.43) and (84.75,85.21) .. (84.75,83.71) -- cycle ;
%Flowchart: Connector [id:dp938752446918804] 
\draw  [fill={rgb, 255:red, 0; green, 0; blue, 0 }  ,fill opacity=1 ] (133.75,85.75) .. controls (133.73,84.25) and (134.93,83.02) .. (136.43,83) .. controls (137.93,82.98) and (139.16,84.18) .. (139.18,85.68) .. controls (139.2,87.18) and (138,88.41) .. (136.5,88.43) .. controls (135,88.45) and (133.77,87.25) .. (133.75,85.75) -- cycle ;
%Flowchart: Connector [id:dp9399395977242846] 
\draw  [fill={rgb, 255:red, 0; green, 0; blue, 0 }  ,fill opacity=1 ] (188.75,85.71) .. controls (188.75,84.22) and (189.97,83) .. (191.46,83) .. controls (192.96,83) and (194.18,84.22) .. (194.18,85.71) .. controls (194.18,87.21) and (192.96,88.43) .. (191.46,88.43) .. controls (189.97,88.43) and (188.75,87.21) .. (188.75,85.71) -- cycle ;

% Text Node
\draw (142,5) node [anchor=north west][inner sep=0.75pt]    {$r_{T}$};
% Text Node
\draw (65,63) node [anchor=north west][inner sep=0.75pt]    {$x_{1}$};
% Text Node
\draw (130.75,62.71) node [anchor=north west][inner sep=0.75pt]    {$x_{2}$};
% Text Node
\draw (190,58) node [anchor=north west][inner sep=0.75pt]    {$x_{k}$};
% Text Node
\draw (76,95) node [anchor=north west][inner sep=0.75pt]  [font=\scriptsize]  {${\textstyle x_{1}^{\downarrow T}}$};
% Text Node
\draw (124,95) node [anchor=north west][inner sep=0.75pt]  [font=\scriptsize]  {${\textstyle x_{2}^{\downarrow T}}$};
% Text Node
\draw (181,96) node [anchor=north west][inner sep=0.75pt]  [font=\scriptsize]  {${\textstyle x_{k}^{\downarrow T}}$};

\end{tikzpicture}
	\caption{Figure shows the orientation of $x_1,x_2,\ldots,x_k$ in terms of their ancestor-decedent relationships for \CASE{1}}
\end{figure}
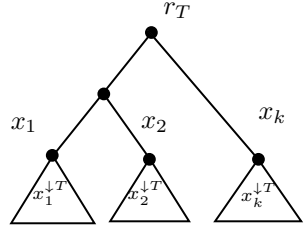
\begin{proof}
	Here, $\forall x_i,x_j\in S$, we have $x_i \perp_T x_j$. This implies $\desc[T]{x_i} \cap \desc[T]{x_j}  = \emptyset$. Since, $|S| \geq 3$, no edge can have endpoints in $\desc[T]{x_i}$ for all $i$ simultaneously (an edge has only two endpoints). Thus $\gamma(\desc[T]x_1,\desc[T]x_2,\ldots,\desc[T]x_k) = 0$.
\end{proof}
\begin{prop}
	\label{obs:gamma_2}
	Let $T$ be a rooted spanning tree and $S  = \curly{x_1,x_2,\ldots,x_k}\subset V \setminus \curly {r_T}$ such that $|S| = k \geq 3$. If 
	\begin{enumerate}
		\item $\exists x,y\in S, x\not\perp_T y$ (exists two nodes that are not independent),
		\item $\exists x'\in S,\ \forall y'\in S,\ y' \in \desc[T]{x'}$ (there exists an $x'$ in $S$ such that all $y'$ in $S$ are decedents of x),
		\item $\exists x''\in S,\ \forall y''\in S, y''\in x'^{\prime \uparrow T}$ (there exists an $x''$ in $S$ such that all $y''$ in $S$ are ancestors of $x$).
	\end{enumerate}
Then $\gamma(\desc[T]{x_1},\desc[T]{x_2},\ldots,\desc[T]x_{k}) = \gamma(\desc[T]{p},\desc[T]{q})$, where $p = \argmax_{x \in S} \level[T]{x}$ and $q = \argmin_{x \in S}\level[T]{x}$.
\end{prop}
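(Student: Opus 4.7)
The plan is to exploit conditions (2) and (3) to show that the family $\{\desc[T]{x_i}\}_{i=1}^k$ is totally ordered by set inclusion, and then to prove a general fact that for such a nested chain the $k$-wise cut intersection collapses to the intersection of the cuts of only the extremal sets. First I would identify the extremal vertices. The vertex $x'$ from condition (2) contains every other vertex of $S$ in its descendant set, so $x'$ is an ancestor of all vertices in $S$ and therefore attains the minimum level, i.e., $x' = q$. Symmetrically, condition (3) makes $x''$ a descendant of every vertex in $S$, so $x'' = p$. For any pair $x_i, x_j \in S$, both lie in $\desc[T]{q}$, hence $x_i \not\perp_T x_j$, and Proposition \ref{prop-simple} then yields $\desc[T]{x_j} \subseteq \desc[T]{x_i}$ whenever $\level[T]{x_i} \leq \level[T]{x_j}$. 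Sorting the $x_i$ by non-decreasing level produces a chain
\begin{equation*}
\desc[T]{q} \supseteq \desc[T]{x_{i_2}} \supseteq \cdots \supseteq \desc[T]{x_{i_{k-1}}} \supseteq \desc[T]{p}.
\end{equation*}

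Next I would prove the auxiliary claim that for any chain $A_1 \supseteq A_2 \supseteq \cdots \supseteq A_k$ of vertex subsets, $\bigcap_{i=1}^k \delta(A_i) = \delta(A_1) \cap \delta(A_k)$. The inclusion from left to right is immediate since $A_1$ and $A_k$ are both in the family. For the reverse, take an edge $e = \{u, v\} \in \delta(A_1) \cap \delta(A_k)$ and by symmetry assume $u \in A_1$, $v \notin A_1$. Nestedness forces $v \notin A_i$ for every $i$, and the condition $e \in \delta(A_k)$ then forces $u \in A_k$, whence $u \in A_i$ for all $i$. Hence $e$ has exactly one endpoint in each $A_i$ and lies in every $\delta(A_i)$.

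Combining the two steps gives $\bigcap_{i=1}^k \delta(\desc[T]{x_i}) = \delta(\desc[T]{q}) \cap \delta(\desc[T]{p})$, and taking cardinalities yields $\gamma(\desc[T]{x_1}, \desc[T]{x_2}, \ldots, \desc[T]{x_k}) = \gamma(\desc[T]{p}, \desc[T]{q})$, the desired equality. I do not foresee a genuine obstacle here; the only delicate point is confirming that the vertices $x', x''$ supplied by hypotheses (2) and (3) are precisely the level-extremal elements $q, p$, which falls out cleanly from unpacking the ancestor/descendant definitions together with Proposition \ref{prop-simple}. Note also that condition (1) is automatically satisfied once (2) and (3) hold and $k \geq 2$, so it plays no role in the argument beyond distinguishing this case from \CASE{1} in the case analysis.
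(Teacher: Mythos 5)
Your proposal is correct and follows essentially the same route as the paper: both first observe (via condition (3)) that the descendant sets form a nested chain, and both then collapse the $k$-wise cut intersection to the two extremes. The paper writes this collapse out as a direct chain of set equalities rather than isolating it as a separate lemma about chains $A_1 \supseteq \cdots \supseteq A_k$, but the underlying edge-endpoint reasoning is identical.
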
	
\begin{figure}[h]
	\centering
	\tikzset{every picture/.style={line width=0.75pt}} %set default line width to 0.75pt        

\begin{tikzpicture}[x=0.75pt,y=0.75pt,yscale=-1,xscale=1]
%uncomment if require: \path (0,211); %set diagram left start at 0, and has height of 211

%Straight Lines [id:da7175216397380968] 
\draw    (173.43,12.57) -- (180.46,122.71) ;
%Flowchart: Connector [id:dp08441673498106073] 
\draw  [fill={rgb, 255:red, 0; green, 0; blue, 0 }  ,fill opacity=1 ] (189.75,-9.29) .. controls (189.75,-10.78) and (190.97,-12) .. (192.46,-12) .. controls (193.96,-12) and (195.18,-10.78) .. (195.18,-9.29) .. controls (195.18,-7.79) and (193.96,-6.57) .. (192.46,-6.57) .. controls (190.97,-6.57) and (189.75,-7.79) .. (189.75,-9.29) -- cycle ;
%Shape: Triangle [id:dp9735893667376674] 
\draw   (180.75,123) -- (201.75,157) -- (159.75,157) -- cycle ;
%Shape: Triangle [id:dp05361766591216499] 
\draw   (178.43,84.32) -- (216.43,169.57) -- (140.43,169.57) -- cycle ;
%Shape: Triangle [id:dp41734957546234686] 
\draw   (174.93,34.57) -- (260.43,201.57) -- (108.43,201.57) -- cycle ;
%Flowchart: Connector [id:dp5966985717949549] 
\draw  [fill={rgb, 255:red, 0; green, 0; blue, 0 }  ,fill opacity=1 ] (172.71,37.29) .. controls (172.71,35.79) and (173.93,34.57) .. (175.43,34.57) .. controls (176.93,34.57) and (178.14,35.79) .. (178.14,37.29) .. controls (178.14,38.78) and (176.93,40) .. (175.43,40) .. controls (173.93,40) and (172.71,38.78) .. (172.71,37.29) -- cycle ;
%Flowchart: Connector [id:dp9956915438879868] 
\draw  [fill={rgb, 255:red, 0; green, 0; blue, 0 }  ,fill opacity=1 ] (177.75,122.71) .. controls (177.75,121.22) and (178.97,120) .. (180.46,120) .. controls (181.96,120) and (183.18,121.22) .. (183.18,122.71) .. controls (183.18,124.21) and (181.96,125.43) .. (180.46,125.43) .. controls (178.97,125.43) and (177.75,124.21) .. (177.75,122.71) -- cycle ;
%Flowchart: Connector [id:dp6139162755811616] 
\draw  [fill={rgb, 255:red, 0; green, 0; blue, 0 }  ,fill opacity=1 ] (175.75,87.06) .. controls (175.73,85.56) and (176.93,84.33) .. (178.43,84.32) .. controls (179.93,84.3) and (181.16,85.5) .. (181.18,87) .. controls (181.19,88.5) and (179.99,89.73) .. (178.49,89.74) .. controls (176.99,89.76) and (175.77,88.56) .. (175.75,87.06) -- cycle ;
%Flowchart: Connector [id:dp5838006777798903] 
\draw  [fill={rgb, 255:red, 0; green, 0; blue, 0 }  ,fill opacity=1 ] (170.71,14.29) .. controls (170.71,12.79) and (171.93,11.57) .. (173.43,11.57) .. controls (174.93,11.57) and (176.14,12.79) .. (176.14,14.29) .. controls (176.14,15.78) and (174.93,17) .. (173.43,17) .. controls (171.93,17) and (170.71,15.78) .. (170.71,14.29) -- cycle ;
%Flowchart: Connector [id:dp5356169510739881] 
\draw  [fill={rgb, 255:red, 0; green, 0; blue, 0 }  ,fill opacity=1 ] (173.71,52.29) .. controls (173.71,50.79) and (174.93,49.57) .. (176.43,49.57) .. controls (177.93,49.57) and (179.14,50.79) .. (179.14,52.29) .. controls (179.14,53.78) and (177.93,55) .. (176.43,55) .. controls (174.93,55) and (173.71,53.78) .. (173.71,52.29) -- cycle ;
%Flowchart: Connector [id:dp7620312695200724] 
\draw  [fill={rgb, 255:red, 0; green, 0; blue, 0 }  ,fill opacity=1 ] (174.71,69.29) .. controls (174.71,67.79) and (175.93,66.57) .. (177.43,66.57) .. controls (178.93,66.57) and (180.14,67.79) .. (180.14,69.29) .. controls (180.14,70.78) and (178.93,72) .. (177.43,72) .. controls (175.93,72) and (174.71,70.78) .. (174.71,69.29) -- cycle ;

% Text Node
\draw (152,4.4) node [anchor=north west][inner sep=0.75pt]    {$r_{T}$};
% Text Node
\draw (183,112.4) node [anchor=north west][inner sep=0.75pt]  [font=\small]  {$x_{1}$};
% Text Node
\draw (178.95,71.04) node [anchor=north west][inner sep=0.75pt]  [font=\small]  {$x_{2}$};
% Text Node
\draw (182,25.4) node [anchor=north west][inner sep=0.75pt]    {$x_{k}$};
% Text Node
\draw (171,137.4) node [anchor=north west][inner sep=0.75pt]  [font=\tiny]  {${\textstyle x_{1}^{\downarrow T}}$};
% Text Node
\draw (161,122.4) node [anchor=north west][inner sep=0.75pt]  [font=\tiny]  {${\textstyle x_{2}^{\downarrow T}}$};
% Text Node
\draw (236,185.4) node [anchor=north west][inner sep=0.75pt]  [font=\scriptsize]  {${\textstyle x_{k}^{\downarrow T}}$};

\end{tikzpicture}
		\caption{Figure shows the orientation of $x_1,x_2,\ldots,x_k$ in terms of their ancestor-decedent relationships for \CASE{2}}
\end{figure}
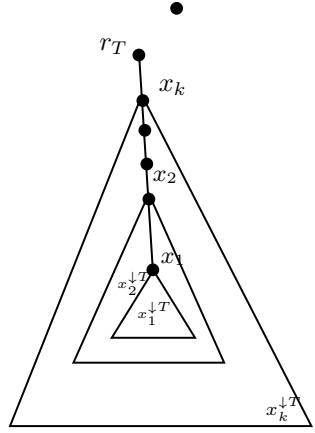
\begin{proof}
	Here, the third condition, subsumes the other two conditions. This is because, if there exists a node $x''$ such that all other nodes are ancestors of it (they are on the path from $x''$ to root $r_T$). Then firstly, all of these are independent. Secondly, there also exists a node such that all the other nodes are decedents of such a node.
	
	WLOG, let $\level[T]{x_1} > \level[T]{x_2} > \ldots > \level[T]{x_k} $.
	Since all vertices in $S$ are on the tree path from $r_T$ to $x_1$, 
	thus we have, $\desc[T]{x_1} \subset \desc[T]{x_2} \ldots \subset \desc[T]{x_k}$. Similarly, $V\setminus \desc[T]{x_1} \supset V\setminus \desc[T]{x_2} \ldots \supset V\setminus \desc[T]{x_k}$.
	By definition, $\delta(\desc[T]{x}) = \curly{(u,v) | u \in \desc[T]{x}, v\in V\setminus \desc[T]{x} }$. Hence, 
	\begin{align}
	&\delta(\desc[T]{x_1}) \cap \cdots \cap \delta(\desc[T]{x_k}) 
	\nonumber\\=& \curly{(u,v) | u \in \desc[T]{x_1}, v\in \bigcap_i V\setminus \desc[T]{x_1}} \bigcap \cdots \bigcap \curly{(u,v) | u \in \desc[T]{x_k}, v\in \bigcap_i V\setminus \desc[T]{x_k}}\nonumber\\ 
	=& \curly{(u,v) | u \in \desc[T]{x_1}, v\in V\setminus \desc[T]{x_k}}\quad (\because \desc[T]{x_1} \subset \desc[T]{x_2} \ldots \subset \desc[T]{x_k})\nonumber\\ 
	=&\curly{(u',v') | u' \in \desc[T]{x_1}, v'\in V\setminus \desc[T]{x_1}} \cap \curly{(u'',v'') | u'' \in \desc[T]{x_k}, v''\in V\setminus \desc[T]{x_k}}\nonumber\\
	=& \delta(\desc[T]{x_1}) \cap \delta(\desc[T]{x_k})\label{obs-gamma-2-}
	\end{align}		 

 Hence, $\gamma(\desc[T]{x_1},\desc[T]{x_2},\ldots,\desc[T]x_{k}) = |\delta(\desc[T]{x_1}) \cap \cdots \cap \delta(\desc[T]{x_k}) | = |\delta(\desc[T]{x_1}) \cap \delta(\desc[T]{x_k})| = \gamma(\desc[T]{x_1},\desc[T]{x_k}) =\gamma(\desc[T]{p},\desc[T]{q})$, where $p = \argmax_{x \in S} \level[T]{x}$ and $q = \argmin_{x \in S}\level[T]{x}$.
\end{proof}
\begin{prop}
	\label{obs:gamma_3}
	Let $T$ be a rooted spanning tree and $S  = \curly{x_1,x_2,\ldots,x_k}\subset V \setminus \curly {r_T}$ such that $|S| = k \geq 3$. If 
	\begin{enumerate}
		\item $\exists x,y\in S, x\not\perp_T y$ (exists two vertices that are not independent),
		\item $\exists x'\in S,\ \forall y'\in S,\ y' \in \desc[T]{x'}$ (there exists an $x'$ in $S$ such that all $y'$ in $S$ are decedents of x), 
		\item $\forall x''\in S,\ \exists y''\in S, y''\not\in x'^{\prime \uparrow T}$ (for all $x''$ in $S$ there exists a $y''$ in $S$ such that $y''$ is not ancestor of $x''$).
	\end{enumerate}
	Then $\gamma(\desc[T]{x_1},\desc[T]{x_2},\ldots,\desc[T]x_{k}) = 0$.
\end{prop}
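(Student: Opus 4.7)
The plan is to show that $\bigcap_{i=1}^{k}\delta(\desc[T]{x_i})=\emptyset$; the claim $\gamma(\desc[T]{x_1},\ldots,\desc[T]{x_k})=0$ is then immediate. By condition~2 I may assume without loss of generality that the witness $x'$ is $x_1$, so that $y'\in\desc[T]{x_1}$ for every $y'\in S$. By Proposition~\ref{prop-simple} this gives $\desc[T]{x_i}\subseteq\desc[T]{x_1}$ for each $i\in[k]$.

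First I would characterise any edge in the alleged intersection. Suppose $(u,v)\in\bigcap_{i=1}^{k}\delta(\desc[T]{x_i})$. Since $(u,v)\in\delta(\desc[T]{x_1})$, after relabeling I may assume $u\in\desc[T]{x_1}$ and $v\notin\desc[T]{x_1}$. The inclusion $\desc[T]{x_i}\subseteq\desc[T]{x_1}$ then forces $v\notin\desc[T]{x_i}$ as well, so $(u,v)\in\delta(\desc[T]{x_i})$ reduces to $u\in\desc[T]{x_i}$ for every $i$; that is, $u\in\bigcap_{i=1}^{k}\desc[T]{x_i}$. It therefore suffices to prove that this common intersection is empty.

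For that, I would argue by contradiction: assume some $u$ lies in $\bigcap_{i=1}^{k}\desc[T]{x_i}$. Then every $x_i\in S$ is an ancestor of $u$ in $T$, and so $S$ is contained in the $r_T$-to-$u$ path, which is totally ordered by $\level[T]{\cdot}$. Let $x_p$ be the element of $S$ of maximum level. Every $x_j\in S$ is an ancestor of $u$ with $\level[T]{x_j}\le\level[T]{x_p}$, and therefore also lies on the $r_T$-to-$x_p$ path, i.e.\ $x_j\in\anc[T]{x_p}$ for every $j$. This contradicts condition~3 applied to $x''=x_p$, which demands some $y''\in S$ outside $\anc[T]{x_p}$.

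The main obstacle I expect is purely in bookkeeping the quantifier structure of condition~3: once one notices that the deepest element of $S$ is the right choice for the adversarial $x''$, the rest of the argument is just the chain structure already exploited in the proof of Proposition~\ref{obs:gamma_2}.
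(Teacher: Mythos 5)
Your proof is correct, and it takes a genuinely different route than the paper's. The paper isolates three specific vertices — the common ancestor $x_k$ from condition~2, the deepest vertex $x_1$, and a vertex $x_2$ furnished by condition~3 that is neither ancestor nor descendant of $x_1$ — verifies the set relations $\desc[T]{x_1}\subset\desc[T]{x_k}$, $\desc[T]{x_2}\subset\desc[T]{x_k}$, $\desc[T]{x_1}\cap\desc[T]{x_2}=\emptyset$, and then argues directly that no edge can cross all three of $\delta(\desc[T]{x_1})$, $\delta(\desc[T]{x_2})$, $\delta(\desc[T]{x_k})$: crossing the first two forces one endpoint into $\desc[T]{x_1}$ and the other into $\desc[T]{x_2}$, placing both inside $\desc[T]{x_k}$ and hence off $\delta(\desc[T]{x_k})$. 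You instead first reduce the whole cut-theoretic claim to the purely combinatorial statement $\bigcap_{i=1}^k\desc[T]{x_i}=\emptyset$, by observing that once condition~2 makes the $\desc[T]{x_i}$ a nested-at-the-top family, any edge in $\bigcap_i\delta(\desc[T]{x_i})$ must have its ``inner'' endpoint lie in every $\desc[T]{x_i}$. You then kill that intersection by noting a common descendant would force $S$ onto a single root-to-leaf path, letting the deepest element of $S$ violate condition~3. Your decomposition is arguably cleaner: it separates the cut-space bookkeeping (the first reduction) from the tree combinatorics (the second step), and it uses all $k$ sets uniformly rather than extracting a distinguished triple. The paper's version is more concrete and closer in spirit to the explicit endpoint-chasing used in Propositions~\ref{obs:gamma_2} and~\ref{obs:gamma_4}, but proves nothing stronger. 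One small bonus of your route worth noting: it makes transparent that condition~3 is exactly the negation of the ``all of $S$ lies on one root path'' configuration handled in Proposition~\ref{obs:gamma_2}, which is the structural reason Cases~2 and~3 are complementary.
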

\begin{figure}[h]
	\centering
	\tikzset{every picture/.style={line width=0.75pt}} %set default line width to 0.75pt        

\begin{tikzpicture}[x=0.75pt,y=0.75pt,yscale=-1,xscale=1]
%uncomment if require: \path (0,174); %set diagram left start at 0, and has height of 174

%Straight Lines [id:da7175216397380968] 
\draw    (173.43,12.57) -- (178.49,89.74) ;
%Flowchart: Connector [id:dp08441673498106073] 
\draw  [fill={rgb, 255:red, 0; green, 0; blue, 0 }  ,fill opacity=1 ] (189.75,-9.29) .. controls (189.75,-10.78) and (190.97,-12) .. (192.46,-12) .. controls (193.96,-12) and (195.18,-10.78) .. (195.18,-9.29) .. controls (195.18,-7.79) and (193.96,-6.57) .. (192.46,-6.57) .. controls (190.97,-6.57) and (189.75,-7.79) .. (189.75,-9.29) -- cycle ;
%Shape: Triangle [id:dp9735893667376674] 
\draw   (147.38,120.71) -- (166,148.67) -- (128.75,148.67) -- cycle ;
%Shape: Triangle [id:dp41734957546234686] 
\draw   (175.43,37.29) -- (289.8,158.71) -- (86.48,158.71) -- cycle ;
%Flowchart: Connector [id:dp5966985717949549] 
\draw  [fill={rgb, 255:red, 0; green, 0; blue, 0 }  ,fill opacity=1 ] (172.71,37.29) .. controls (172.71,35.79) and (173.93,34.57) .. (175.43,34.57) .. controls (176.93,34.57) and (178.14,35.79) .. (178.14,37.29) .. controls (178.14,38.78) and (176.93,40) .. (175.43,40) .. controls (173.93,40) and (172.71,38.78) .. (172.71,37.29) -- cycle ;
%Flowchart: Connector [id:dp9956915438879868] 
\draw  [fill={rgb, 255:red, 0; green, 0; blue, 0 }  ,fill opacity=1 ] (145.04,120.71) .. controls (145.04,119.22) and (146.25,118) .. (147.75,118) .. controls (149.25,118) and (150.46,119.22) .. (150.46,120.71) .. controls (150.46,122.21) and (149.25,123.43) .. (147.75,123.43) .. controls (146.25,123.43) and (145.04,122.21) .. (145.04,120.71) -- cycle ;
%Flowchart: Connector [id:dp6139162755811616] 
\draw  [fill={rgb, 255:red, 0; green, 0; blue, 0 }  ,fill opacity=1 ] (175.75,87.06) .. controls (175.73,85.56) and (176.93,84.33) .. (178.43,84.32) .. controls (179.93,84.3) and (181.16,85.5) .. (181.18,87) .. controls (181.19,88.5) and (179.99,89.73) .. (178.49,89.74) .. controls (176.99,89.76) and (175.77,88.56) .. (175.75,87.06) -- cycle ;
%Flowchart: Connector [id:dp5838006777798903] 
\draw  [fill={rgb, 255:red, 0; green, 0; blue, 0 }  ,fill opacity=1 ] (170.71,14.29) .. controls (170.71,12.79) and (171.93,11.57) .. (173.43,11.57) .. controls (174.93,11.57) and (176.14,12.79) .. (176.14,14.29) .. controls (176.14,15.78) and (174.93,17) .. (173.43,17) .. controls (171.93,17) and (170.71,15.78) .. (170.71,14.29) -- cycle ;
%Flowchart: Connector [id:dp5356169510739881] 
\draw  [fill={rgb, 255:red, 0; green, 0; blue, 0 }  ,fill opacity=1 ] (200.75,105) .. controls (200.75,103.5) and (201.97,102.29) .. (203.46,102.29) .. controls (204.96,102.29) and (206.18,103.5) .. (206.18,105) .. controls (206.18,106.5) and (204.96,107.71) .. (203.46,107.71) .. controls (201.97,107.71) and (200.75,106.5) .. (200.75,105) -- cycle ;
%Straight Lines [id:da5521488620333013] 
\draw    (178.46,87.03) -- (147.75,120.71) ;
%Shape: Triangle [id:dp8910564464888047] 
\draw   (202.75,105) -- (223.75,139) -- (181.75,139) -- cycle ;
%Straight Lines [id:da6787186828118168] 
\draw    (178.46,87.03) -- (202.75,105) ;

% Text Node
\draw (152,4.4) node [anchor=north west][inner sep=0.75pt]    {$r_{T}$};
% Text Node
\draw (144,100.4) node [anchor=north west][inner sep=0.75pt]  [font=\small]  {$x_{1}$};
% Text Node
\draw (205.95,90.04) node [anchor=north west][inner sep=0.75pt]  [font=\small]  {$x_{2}$};
% Text Node
\draw (182,25.4) node [anchor=north west][inner sep=0.75pt]    {$x_{k}$};
% Text Node
\draw (138,132.4) node [anchor=north west][inner sep=0.75pt]  [font=\tiny]  {${\textstyle x_{1}^{\downarrow T}}$};
% Text Node
\draw (196,126.4) node [anchor=north west][inner sep=0.75pt]  [font=\tiny]  {${\textstyle x_{2}^{\downarrow T}}$};

\end{tikzpicture}
	\caption{Figure shows the orientation of $x_1,x_2,\ldots,x_k$ in terms of their ancestor-decedent relationships for \CASE{3}}
\end{figure}
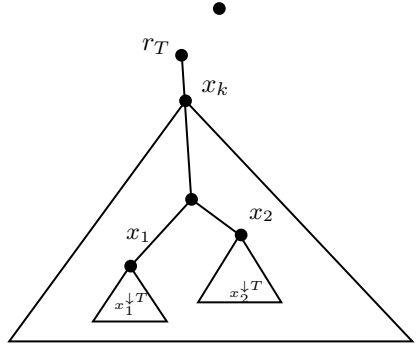
\begin{proof}
Here, condition (2) subsumes condition (1). This is because there exists a node such that all other nodes are its decedents. Let $x_k$ be such a node. Hence $x_k\not\perp_T x_i$ for all $x_i \in S$, and $x_i \neq x_k$.
Let $x_1 = \argmax_{x \in S}$ be the node that is furthest from the root $r_T$ in the tree $T$. If many such nodes exist, then choose an arbitrary node. Choosing a furthest node implies that there does not exist $x \in S$, such that $x \in \desc[T]{x_1}$. By condition (3), we know that there is a node $x_2\notin x_1^{\uparrow T}$. But $x_2 \in \desc[T]x_k$ (condition (2)), and by choice of $x_1$, $x_2 \notin\desc[T]{x_1}$. Thus the set relationships of $\desc[T]{x_1}$, $\desc[T]{x_2}$, and $\desc[T]{x_k}$ is as follows: $\desc[T]{x_1} \subset \desc[T]{x_k}$, $\desc[T]{x_2} \subset \desc[T]{x_k}$ and $\desc[T]{x_1} \cap \desc[T]{x_2} = \emptyset$. 

In the remaining part we show $\delta(\desc[T]{x_1}) \cap \delta(\desc[T]{x_2}) \cap \delta(\desc[T]{x_k}) = \emptyset$. Firstly, $\delta(\desc[T]{x_1}) \cap \delta(\desc[T]{x_2})$ contains those edges which have one end point in $\desc[T]{x_1}$ and other in $\desc[T]{x_2}$. Also, $\delta(\desc[T]{x_k})$ contains those edges which have one end point in $\desc[T]{x_k}$ and the other point outside of $\desc[T]{x_k}$. Because of the aforementioned set relationship, no edge in $\delta(\desc[T]{x_k})$ is contained in $\delta(\desc[T]{x_1}) \cap \delta(\desc[T]{x_2})$. Hence $\delta(\desc[T]{x_1}) \cap \delta(\desc[T]{x_2}) \cap \delta(\desc[T]{x_k}) = \emptyset$. Thus $\delta(\desc[T]{x_1}) \cap \delta(\desc[T]{x_2}) \cap \cdots \cap \delta(\desc[T]{x_k}) = \emptyset$ which implies that $\gamma(\desc[T]x_1,\desc[T]x_2,\ldots,\desc[T]x_k) = 0$.
\end{proof}
\begin{prop}
	\label{obs:gamma_4}
	Let $T$ be a rooted spanning tree and $S  = \curly{x_1,x_2,\ldots,x_k}\subset V \setminus \curly {r_T}$ such that $|S| = k \geq 3$. If 
	\begin{enumerate}
		\item $\exists x,y\in S, x\not\perp_T y$ (exists two vertices that are not independent),
		\item $\forall x' \in S, \exists y' \in S, y' \notin \desc[T]{x'}$
	\end{enumerate}
	Then $\gamma(\desc[T]{x_1},\desc[T]{x_2},\ldots,\desc[T]x_{k}) = \gamma(\desc[T]{y_1},\ldots,\desc[T]y_{k-1})$ where $\curly{y_1,y_2,\ldots,y_{k-1}} = S \setminus \curly{a}$ for some $a \in S$.
\end{prop}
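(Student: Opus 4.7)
The plan is to identify some $a \in S$ so that $\delta(\desc[T]{a})$ is \emph{redundant} in the $k$-wise intersection, meaning $\bigcap_{x_i \neq a}\delta(\desc[T]{x_i}) \subseteq \delta(\desc[T]{a})$. Once this inclusion is in hand, $\bigcap_{i=1}^k \delta(\desc[T]{x_i}) = \bigcap_{x_i \neq a}\delta(\desc[T]{x_i})$, so taking cardinalities yields $\gamma(\desc[T]{x_1},\ldots,\desc[T]{x_k}) = \gamma(\desc[T]{y_1},\ldots,\desc[T]{y_{k-1}})$ with $\curly{y_1,\ldots,y_{k-1}} = S \setminus \curly{a}$, which is exactly the desired conclusion.

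I would pick $a$ as follows. Condition~(1) supplies $x,y \in S$ with $x \not\perp_T y$; by Proposition~\ref{prop-simple} one is a descendant of the other, so after relabeling assume $y \in \desc[T]{x}$ with $y \neq x$. Consider $R = \curly{z \in S : y \in \desc[T]{z}}$. This set contains both $x$ and $y$, and has at least one element strictly above $y$, namely $x$. Take $a$ to be any element of $R$ of smallest level $\level[T]{\cdot}$. By construction $y \in \desc[T]{a}$ and $a \neq y$. Moreover $a$ has no proper ancestor in $S$: any $z \in S \setminus \curly{a}$ with $a \in \desc[T]{z}$ would satisfy $y \in \desc[T]{a} \subseteq \desc[T]{z}$, placing $z$ in $R$ at a strictly smaller level than $a$ and contradicting the choice of $a$. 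Call this property $a$ being \emph{top-level} in $S$.

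Next I verify the redundancy claim. Fix an arbitrary $e = (u,v) \in \bigcap_{x_i \neq a}\delta(\desc[T]{x_i})$. Since $y \in S\setminus\curly{a}$, we have $e \in \delta(\desc[T]{y})$; relabel the endpoints so that $u \in \desc[T]{y} \subseteq \desc[T]{a}$ and $v \notin \desc[T]{y}$. Condition~(2) applied to $x' = a$ supplies $w \in S$ with $w \notin \desc[T]{a}$, so in particular $w \neq a$. Because $a$ is top-level in $S$, also $a \notin \desc[T]{w}$, so $w$ is neither descendant nor ancestor of $a$; the contrapositive of Proposition~\ref{prop-simple} then forces $w \perp_T a$, i.e.\ $\desc[T]{w} \cap \desc[T]{a} = \emptyset$. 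Since $e \in \delta(\desc[T]{w})$ and $u \in \desc[T]{a}$ cannot also lie in $\desc[T]{w}$, the other endpoint satisfies $v \in \desc[T]{w}$, hence $v \notin \desc[T]{a}$. Therefore $e \in \delta(\desc[T]{a})$, completing the redundancy argument.

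The delicate point is the calibration of $a$. Condition~(2) by itself only produces a $w \notin \desc[T]{a}$; for the closing step I need $\desc[T]{w}$ to be genuinely \emph{disjoint} from $\desc[T]{a}$ so that $v \in \desc[T]{w}$ forces $v \notin \desc[T]{a}$. Picking $a$ to be a shallowest element of $R$ is precisely the device that rules out $w$ being an ancestor of $a$, which in turn is what lets Proposition~\ref{prop-simple} deliver the required disjointness and make the whole argument go through.
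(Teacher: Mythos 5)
Your proposal is correct and follows essentially the same strategy as the paper: pick $a$ to be an element of $S$ with a proper descendant in $S$ but no proper ancestor in $S$ (you construct it via the set $R$; the paper reaches the same configuration by iteratively replacing the top of a non-independent pair), then use the witness from condition~(2), forced to be disjoint from $\desc[T]{a}$, to conclude that $\delta(\desc[T]{a})$ is redundant in the intersection. The only cosmetic difference is that you run a direct element-chase on the $(k-1)$-wise intersection, whereas the paper first establishes the set identity $\delta(\desc[T]{a}) \cap \delta(\desc[T]{v}) \cap \delta(\desc[T]{u}) = \delta(\desc[T]{u}) \cap \delta(\desc[T]{v})$ and deduces the same containment from it.
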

\begin{figure}[h]
	\centering
	\tikzset{every picture/.style={line width=0.75pt}} %set default line width to 0.75pt        

\begin{tikzpicture}[x=0.75pt,y=0.75pt,yscale=-1,xscale=1]
%uncomment if require: \path (0,150); %set diagram left start at 0, and has height of 150

%Straight Lines [id:da30495313637386556] 
\draw    (152.43,18.14) -- (155.43,99.71) ;
%Shape: Triangle [id:dp11418274808024198] 
\draw   (154.38,102.71) -- (173,130.67) -- (135.75,130.67) -- cycle ;
%Shape: Triangle [id:dp0899567921537503] 
\draw   (155.03,72.07) -- (212.21,138.5) -- (108.08,138.5) -- cycle ;
%Flowchart: Connector [id:dp8091883385105711] 
\draw  [fill={rgb, 255:red, 0; green, 0; blue, 0 }  ,fill opacity=1 ] (152.71,73.29) .. controls (152.71,71.79) and (153.93,70.57) .. (155.43,70.57) .. controls (156.93,70.57) and (158.14,71.79) .. (158.14,73.29) .. controls (158.14,74.78) and (156.93,76) .. (155.43,76) .. controls (153.93,76) and (152.71,74.78) .. (152.71,73.29) -- cycle ;
%Flowchart: Connector [id:dp7962430808536638] 
\draw  [fill={rgb, 255:red, 0; green, 0; blue, 0 }  ,fill opacity=1 ] (152.04,102.71) .. controls (152.04,101.22) and (153.25,100) .. (154.75,100) .. controls (156.25,100) and (157.46,101.22) .. (157.46,102.71) .. controls (157.46,104.21) and (156.25,105.43) .. (154.75,105.43) .. controls (153.25,105.43) and (152.04,104.21) .. (152.04,102.71) -- cycle ;
%Flowchart: Connector [id:dp6601926898329162] 
\draw  [fill={rgb, 255:red, 0; green, 0; blue, 0 }  ,fill opacity=1 ] (352.75,66.06) .. controls (352.73,64.56) and (353.93,63.33) .. (355.43,63.32) .. controls (356.93,63.3) and (358.16,64.5) .. (358.18,66) .. controls (358.19,67.5) and (356.99,68.73) .. (355.49,68.74) .. controls (353.99,68.76) and (352.77,67.56) .. (352.75,66.06) -- cycle ;
%Flowchart: Connector [id:dp06765955105748023] 
\draw  [fill={rgb, 255:red, 0; green, 0; blue, 0 }  ,fill opacity=1 ] (149.71,15.43) .. controls (149.71,13.93) and (150.93,12.71) .. (152.43,12.71) .. controls (153.93,12.71) and (155.14,13.93) .. (155.14,15.43) .. controls (155.14,16.93) and (153.93,18.14) .. (152.43,18.14) .. controls (150.93,18.14) and (149.71,16.93) .. (149.71,15.43) -- cycle ;
%Flowchart: Connector [id:dp48400280052651157] 
\draw  [fill={rgb, 255:red, 0; green, 0; blue, 0 }  ,fill opacity=1 ] (243.04,86) .. controls (243.04,84.5) and (244.25,83.29) .. (245.75,83.29) .. controls (247.25,83.29) and (248.46,84.5) .. (248.46,86) .. controls (248.46,87.5) and (247.25,88.71) .. (245.75,88.71) .. controls (244.25,88.71) and (243.04,87.5) .. (243.04,86) -- cycle ;
%Straight Lines [id:da1766332720914836] 
\draw    (153.43,37.71) -- (245.75,86) ;
%Shape: Triangle [id:dp21592871918889966] 
\draw   (245.75,86) -- (266.75,120) -- (224.75,120) -- cycle ;

% Text Node
\draw (129,6.4) node [anchor=north west][inner sep=0.75pt]    {$r_{T}$};
% Text Node
\draw (159,88.4) node [anchor=north west][inner sep=0.75pt]  [font=\footnotesize]  {$v$};
% Text Node
\draw (245.95,67.04) node [anchor=north west][inner sep=0.75pt]  [font=\small]  {$u$};
% Text Node
\draw (139,58.4) node [anchor=north west][inner sep=0.75pt]    {$a$};
% Text Node
\draw (145,114.4) node [anchor=north west][inner sep=0.75pt]  [font=\tiny]  {${\textstyle v^{\downarrow T}}$};
% Text Node
\draw (241,103.4) node [anchor=north west][inner sep=0.75pt]  [font=\tiny]  {${\textstyle u^{\downarrow T}}$};
% Text Node
\draw (183,123.4) node [anchor=north west][inner sep=0.75pt]  [font=\tiny]  {${\textstyle a^{\downarrow T}}$};

\end{tikzpicture}
	\caption{Figure shows the orientation of $a,v,u$ in terms of their ancestor-decedent relationships for \CASE{4}}
\end{figure}
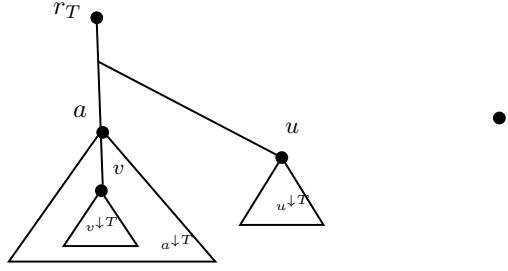
\begin{proof}
Let $a,v$ be two nodes such that $a\not\perp_T v$ (condition (1)), where $\level[T]{a} < \level[T]{v}$. Here we choose an $a,v$ pair such that no other node $a'$ exists in $S$ such that $\level[T]{a'} < \level[T]{a}$ and $a' \not\perp_T a$. If it exists then our chosen pair is $a',a$. Based on condition (3), we know that $\exists u$ such that $u \notin \desc[T]{a}$. According to the choice of $u,v$, and $a$, we show that $\desc[T]{v} \subset \desc[T]{a}$, $a \perp_T u$, and $v\perp_T u$. Firstly, $\desc[T]{v} \subset \desc[T]{a}$ is true because $a\not\perp_T v$, and $\level[T]{a} < \level[T]{v}$ (see Proposition \ref{prop-simple}). From the choice, $u \notin \desc[T]{a}$. Then either $a \perp_T u$, or $u$ is the ancestor of $a$. The latter cannot be true because we have chosen $a$ such that no $a' \in S$ exists such that $\level[T]{a'} < \level[T]{a}$ and $a'\not\perp_T a$. Thus $u \perp_T a$. This also implies that $u\perp_T v$, because $\desc[T]v \subset \desc[T]{a}$.

Lastly, in this proof we show that $\delta(\desc[T]a) \cap \delta(\desc[T]u) \cap \delta(\desc[T]v) =\delta(\desc[T]u) \cap \delta(\desc[T]v)$. This implies that $\gamma(\desc[T]{x_1},\desc[T]{x_2}\ldots,\desc[T]x_{k}) = \gamma(\desc[T]{y_1},\desc[T]{y_2}\ldots,\desc[T]y_{k-1})$, where $\curly{y_1,y_2,\ldots,y_{k-1}} = Q \setminus \curly{a}$. That is, we can eliminate $\desc[T]{a}$ to compute \sloppy$\gamma(\desc[T]{x_1},\desc[T]{x_2}\ldots,\desc[T]x_{k})$. 
\begin{align*}
&\delta(\desc[T]a) \cap \delta(\desc[T]v) \cap \delta(\desc[T]u) \\
=&\curly{(x,y) | x\in \desc[T]{a}, y\in V\setminus \desc[T]{a}} 
\cap \curly{(x,y) | x\in \desc[T]{v}, y\in V\setminus \desc[T]{v}}\\ 
&\cap \curly{(x,y) | x\in \desc[T]{u}, y\in V\setminus \desc[T]{u}}\\
=&\curly{(x,y) | x\in \desc[T]{v}, y\in V\setminus \desc[T]{a}} \cap \curly{(x,y) | x\in \desc[T]{u}, y\in V\setminus \desc[T]{u}}\quad (\therefore \desc[T]{v} \subset \desc[T]{a}) \\
=&\curly{(x,y) | x\in \desc[T]{v}, y\in \desc[T]{u}}\quad (\because \desc[T]{v} \subset V\setminus \desc[T]{u} \text{\ and\ }  \desc[T]{u} \subset V\setminus \desc[T]{a}) \\
=&\delta\paren{\desc[T]{u}} \cap \delta\paren{\desc[T]{v}}
\end{align*}
The last inequality is true because $u\perp_T v$ and $\curly{(x,y) | x\in \desc[T]{v}, y\in \desc[T]{u}}$ {contain all the edges} that have an endpoint in $\desc[T]{u}$ and $\desc[T]{v}$
\end{proof}

\nocite{karger2000minimum}
\nocite{thorup2007fully}

\bibliography{mybibfile.bib}

\newcommand{\etalchar}[1]{$^{#1}$}
\begin{thebibliography}{CMW{\etalchar{+}}94}

\bibitem[AKL{\etalchar{+}}21]{Abboud2021-cd}
Amir Abboud, Robert Krauthgamer, Jason Li, Debmalya Panigrahi, Thatchaphol
  Saranurak, and Ohad Trabelsi.
\newblock Breaking the cubic barrier for all-pairs max-flow: {Gomory-Hu} tree
  in nearly quadratic time.
\newblock November 2021.

\bibitem[BLS20]{bhardwaj2020simple}
Nalin Bhardwaj, Antonio~Molina Lovett, and Bryce Sandlund.
\newblock A simple algorithm for minimum cuts in near-linear time.
\newblock In Susanne Albers, editor, {\em 17th Scandinavian Symposium and
  Workshops on Algorithm Theory, {SWAT} 2020, June 22-24, 2020, T{\'{o}}rshavn,
  Faroe Islands}, volume 162 of {\em LIPIcs}, pages 12:1--12:18. Schloss
  Dagstuhl - Leibniz-Zentrum f{\"{u}}r Informatik, 2020.

\bibitem[BM76]{bondy1976graph}
J.~Adrian Bondy and Uppaluri S.~R. Murty.
\newblock {\em Graph Theory with Applications}.
\newblock Macmillan Education {UK}, 1976.

\bibitem[CMW{\etalchar{+}}94]{chen1994short}
Boliong Chen, Makoto Matsumoto, Jianfang Wang, Zhongfu Zhang, and Jianxun
  Zhang.
\newblock A short proof of nash-williams' theorem for the arboricity of a
  graph.
\newblock {\em Graphs Comb.}, 10(1):27--28, 1994.

\bibitem[DEMN21]{dory2021distributed}
Michal Dory, Yuval Efron, Sagnik Mukhopadhyay, and Danupon Nanongkai.
\newblock Distributed weighted min-cut in nearly-optimal time.
\newblock In Samir Khuller and Virginia~Vassilevska Williams, editors, {\em
  {STOC} '21: 53rd Annual {ACM} {SIGACT} Symposium on Theory of Computing,
  Virtual Event, Italy, June 21-25, 2021}, pages 1144--1153. {ACM}, 2021.

\bibitem[DHNS19]{daga2019distributed}
Mohit Daga, Monika Henzinger, Danupon Nanongkai, and Thatchaphol Saranurak.
\newblock Distributed edge connectivity in sublinear time.
\newblock In Moses Charikar and Edith Cohen, editors, {\em Proceedings of the
  51st Annual {ACM} {SIGACT} Symposium on Theory of Computing, {STOC} 2019,
  Phoenix, AZ, USA, June 23-26, 2019}, pages 343--354. {ACM}, 2019.

\bibitem[Die12]{diestel2018graph}
Reinhard Diestel.
\newblock {\em Graph Theory, 4th Edition}, volume 173 of {\em Graduate texts in
  mathematics}.
\newblock Springer, 2012.

\bibitem[GMW20]{gawrychowski2020minimum}
Pawel Gawrychowski, Shay Mozes, and Oren Weimann.
\newblock Minimum cut in o(m log{\({^2}\)} n) time.
\newblock In Artur Czumaj, Anuj Dawar, and Emanuela Merelli, editors, {\em 47th
  International Colloquium on Automata, Languages, and Programming, {ICALP}
  2020, July 8-11, 2020, Saarbr{\"{u}}cken, Germany (Virtual Conference)},
  volume 168 of {\em LIPIcs}, pages 57:1--57:15. Schloss Dagstuhl -
  Leibniz-Zentrum f{\"{u}}r Informatik, 2020.

\bibitem[GNT20]{ghaffari2020faster}
Mohsen Ghaffari, Krzysztof Nowicki, and Mikkel Thorup.
\newblock Faster algorithms for edge connectivity via random 2-out
  contractions.
\newblock In Shuchi Chawla, editor, {\em Proceedings of the 2020 {ACM-SIAM}
  Symposium on Discrete Algorithms, {SODA} 2020, Salt Lake City, UT, USA,
  January 5-8, 2020}, pages 1260--1279. {SIAM}, 2020.

\bibitem[GZ22]{ghaffari2022universally}
Mohsen Ghaffari and Goran Zuzic.
\newblock Universally-optimal distributed exact min-cut.
\newblock pages 281--291, 2022.

\bibitem[HRW20]{henzinger2020local}
Monika Henzinger, Satish Rao, and Di~Wang.
\newblock Local flow partitioning for faster edge connectivity.
\newblock {\em {SIAM} J. Comput.}, 49(1):1--36, 2020.

\bibitem[Kar00]{karger2000minimum}
David~R. Karger.
\newblock Minimum cuts in near-linear time.
\newblock {\em J. {ACM}}, 47(1):46--76, 2000.

\bibitem[KT15]{kawarabayashi2015deterministic}
Ken{-}ichi Kawarabayashi and Mikkel Thorup.
\newblock Deterministic global minimum cut of a simple graph in near-linear
  time.
\newblock In Rocco~A. Servedio and Ronitt Rubinfeld, editors, {\em Proceedings
  of the Forty-Seventh Annual {ACM} on Symposium on Theory of Computing, {STOC}
  2015, Portland, OR, USA, June 14-17, 2015}, pages 665--674. {ACM}, 2015.

\bibitem[MN20]{mukhopadhyay2020weighted}
Sagnik Mukhopadhyay and Danupon Nanongkai.
\newblock Weighted min-cut: sequential, cut-query, and streaming algorithms.
\newblock In Konstantin Makarychev, Yury Makarychev, Madhur Tulsiani, Gautam
  Kamath, and Julia Chuzhoy, editors, {\em Proccedings of the 52nd Annual {ACM}
  {SIGACT} Symposium on Theory of Computing, {STOC} 2020, Chicago, IL, USA,
  June 22-26, 2020}, pages 496--509. {ACM}, 2020.

\bibitem[Par19]{parter2019small}
Merav Parter.
\newblock Small cuts and connectivity certificates: {A} fault tolerant
  approach.
\newblock In Jukka Suomela, editor, {\em 33rd International Symposium on
  Distributed Computing, {DISC} 2019, October 14-18, 2019, Budapest, Hungary},
  volume 146 of {\em LIPIcs}, pages 30:1--30:16. Schloss Dagstuhl -
  Leibniz-Zentrum f{\"{u}}r Informatik, 2019.

\bibitem[PT11]{pritchard2011fast}
David Pritchard and Ramakrishna Thurimella.
\newblock Fast computation of small cuts via cycle space sampling.
\newblock {\em {ACM} Trans. Algorithms}, 7(4):46:1--46:30, 2011.

\bibitem[Sar21]{saranurak2021simple}
Thatchaphol Saranurak.
\newblock A simple deterministic algorithm for edge connectivity.
\newblock In Hung~Viet Le and Valerie King, editors, {\em 4th Symposium on
  Simplicity in Algorithms, {SOSA} 2021, Virtual Conference, January 11-12,
  2021}, pages 80--85. {SIAM}, 2021.

\bibitem[Tho07]{thorup2007fully}
Mikkel Thorup.
\newblock Fully-dynamic min-cut.
\newblock {\em Comb.}, 27(1):91--127, 2007.

\end{thebibliography}

\end{document}